\documentclass[12pt,reqno]{amsart}
\usepackage[headings]{fullpage}
\usepackage{amssymb,amsmath,amscd,bbm,tikz-cd}
\usepackage[all,cmtip]{xy}
\usepackage{url}
\usepackage[bookmarks=true,%
    colorlinks=true,%
    linkcolor=blue,
    citecolor=blue,%
    filecolor=blue,%
    menucolor=blue,%
    urlcolor=blue,%
    breaklinks=true]{hyperref}
\usepackage{slashed}
\usepackage{listings}
\usepackage{verbatim}
\usepackage{mathtools}
\usepackage[normalem]{ulem}   
\usepackage{graphicx}
\usepackage{texdraw}
\usepackage{caption}
\graphicspath{{figures/}}

\newcommand{\BA}{\mathbb{A}}
\newcommand{\BZ}{\mathbb{Z}}

\newcommand{\BN}{\mathbb{N}}

\newcommand{\calA}{\mathcal{A}}
\newcommand{\calC}{\mathcal{C}}

\newcommand{\calX}{\mathcal{X}}

\newcommand{\calT}{\mathcal{T}}
\newcommand{\sgn}{\mathrm{sgn}}
\newcommand{\LLT}{\mathrm{LLT}}
\newcommand{\Sk}{\mathrm{Sk}}

\newcommand{\SW}{\mathrm{SW}}
\newcommand{\inv}{\mathrm{inv}}
\newcommand{\qbinom}[2]{\genfrac{[}{]}{0pt}{}{#1}{#2}}

\usepackage[colorinlistoftodos]{todonotes}

\newtheorem{theorem}{Theorem}[section]
\newtheorem{corollary}[theorem]{Corollary}
\newtheorem{conjecture}[theorem]{Conjecture}
\newtheorem{theoremA}{Theorem}

\theoremstyle{definition}
\newtheorem{lemma}[theorem]{Lemma}
\newtheorem{definition}[theorem]{Definition}
\newtheorem{proposition}[theorem]{Proposition}
\newtheorem{example}[theorem]{Example}
\newtheorem{remark}[theorem]{Remark}

\begin{document}

\title[ ]{Chromatic symmetric functions for annular webs}

\author{Jaeseong Oh}
\address{Department of Mathematics \\
    Sungkyunkwan University \\
    Suwon, South Korea 
	\newline{\tt \url{https://sites.google.com/view/jaeseong-oh}}
}
\email{jaeseongoh@skku.edu}

\author{Seokbeom Yoon}
\address{Department of Mathematics, Center for Quantum Computing\\
	Chonnam National University \\
	Gwangju, South Korea 
	\newline{\tt \url{https://sites.google.com/view/seokbeom}}
}
\email{sbyoon15@gmail.com}

\keywords{Chromatic symmetric functions, annular links, annular webs, Shareshian--Wachs involution, Schur positivity, loop decompositions, unit interval graphs}

\date{\today}

\begin{abstract}
    We introduce a combinatorial definition of chromatic symmetric functions for annular webs. We prove their symmetry by constructing a web analogue of the Shareshian--Wachs involution and show that they coincide with the symmetric functions associated to annular webs via Turaev's isomorphism. We then derive explicit formulas for their hook Schur coefficients. 
    We also introduce web LLT functions, whose hook Schur coefficients admit positive Laurent-polynomial formulas. These formulas yield a combinatorial expression for the coefficients of the HOMFLY--PT polynomial of an annular web.
\end{abstract}

\maketitle

\section{Introduction}

    Turaev's isomorphism identifies the positive part of the skein algebra of the annulus with the ring of symmetric functions~\cite{Tur88}. This correspondence, in addition, associates a symmetric function with each annular web, with an essential circle of thickness $k$ corresponding to the elementary symmetric function $e_k$. Motivated by their categorification of the annular skein, Queffelec and Rose \cite{QR18} conjectured that every annular web is isomorphic to a direct sum of collections of thickened essential circles in the annular web/foam category. Under Turaev's isomorphism, this conjecture decategorifies to the following $e$-positivity conjecture.

    \begin{conjecture}[{\cite[Conjecture~5.4]{QR18}}]
        \label{conj:e-positivity}
        The symmetric function associated with every annular web is $e$-positive.
    \end{conjecture}
    
    A weaker version of this conjecture, namely Schur positivity, was proved by Gorsky and Wedrich~\cite[Corollary~1.4]{GW23} using Schur functors in the Karoubi-completed annular web category.

    \subsection{Overview}
    
    In this paper, we introduce a combinatorial model for these symmetric functions using colorings of annular webs.
    We present a brief summary of our approach and main results.

    For an annular web $w$, we define its \emph{chromatic symmetric function} by
    \[
        X_w[\mathbf x;q]:=\sum_{c\in\mathcal C(w)}q^{\inv_w(c)}x^c,
    \]
    where $\mathcal C(w)$ denotes the set of proper web colorings, $\inv_w(c)$ is a signed inversion statistic, and $x^c$ records the number of loops of each color. Precise definitions are given in Section~\ref{sec.chrom}. 
    
    We first prove the symmetry of $X_w$ by constructing a web analogue of the Shareshian--Wachs involution on $\calC(w)$, related to the construction in~\cite{KO26}, and identify $X_w$ with the symmetric function associated with $w$ under Turaev's isomorphism.

    \begin{theoremA}
    \label{thm.mainA}
        For each $i\geq1$, there is an involution on $\calC(w)$ that preserves $\inv_w(c)$ and interchanges the variables $x_i$ and $x_{i+1}$ in $x^c$. Consequently, $X_w$ is symmetric. Moreover, $X_w$ coincides with the symmetric function associated with $w$ under Turaev's isomorphism.
    \end{theoremA}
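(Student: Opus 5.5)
We will prove Theorem~\ref{thm.mainA} in three stages: construct the involution, deduce symmetry, and then identify $X_w$ with the Turaev image by checking that both sides obey the same local skein relations.

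\emph{Step 1: the involution.} Fix $i\ge1$ and a proper coloring $c\in\calC(w)$. Restrict attention to the sub-web $w_{i,i+1}(c)$ consisting of the edges whose colors involve $i$ or $i+1$. After forgetting all other colors, this sub-web decomposes into connected pieces. Each piece is of one of three kinds:
\begin{itemize}
\item a piece carrying only $i$ or only $i+1$;
\item a piece carrying both colors on a common edge, which is frozen;
\item an alternating chain or loop along which $i$ and $i+1$ trade places.
\end{itemize}
This mirrors the connected components of the $\{i,i+1\}$-colored subgraph in the Shareshian--Wachs argument for unit interval graphs. Swapping $i\leftrightarrow i+1$ on a whole component preserves properness, and in the essential-loop picture it changes the number of $i$-loops versus $(i+1)$-loops.

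Following Shareshian--Wachs, we do not swap every component. We pair up the "free" components by their order along the annulus, in the way that corresponds to the runs of the word in $i,i+1$. We then reverse a run $i^a(i+1)^b\mapsto i^b(i+1)^a$, so that $\inv_w$ is unchanged. The signed inversions contributed by pairs in which neither color lies in $\{i,i+1\}$ are untouched. Pairs involving one color in $\{i,i+1\}$ and one other color are unchanged because the colors are adjacent integers, so the relative order with any third color is the same. The only delicate contributions are therefore the $i$ versus $i+1$ crossings inside the chosen run. We must check that the run-reversal preserves their signed count.

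\emph{Step 2: symmetry.} Granting Step 1, the involution interchanges the exponents of $x_i$ and $x_{i+1}$ in $x^c$ and fixes $q^{\inv_w(c)}$. Hence $X_w$ is invariant under each adjacent transposition, and so it is symmetric.

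\emph{Step 3: comparison with Turaev's map.} Both $w\mapsto X_w$ and Turaev's map $\Phi$ factor through a few local relations: circle removal, digon removal, the square switch relations, and the associativity of merges and splits. Once $X_w$ is known to be symmetric, we proceed as follows:
\begin{enumerate}
\item Verify that $X_w$ satisfies each of these relations by a local bijection (or sign-reversing involution) on colorings. For instance, a digon of thicknesses $a,b$ contributes $\qbinom{a+b}{a}$, since the inversion count on its internal edges is the $q$-count of subsets.
\item Use the fact that annular webs reduce, via these relations, to $\BZ[q^{\pm}]$-combinations of disjoint unions of essential circles. The reduction can be done, for example, through the ladder/quantum-group presentation of Queffelec--Rose.
\item Compute the base case directly. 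An essential circle of thickness $k$ is colored by a $k$-subset of colors with zero inversions, so $X=e_k$; disjoint nested circles multiply.
\end{enumerate}
Since $\Phi$ also sends this circle to $e_k$ and satisfies the same relations, it follows that $X_w=\Phi(w)$.

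\emph{Main obstacle.} We expect the hardest part to be the inversion-preservation check in Step 1, because $\inv_w$ is signed and depends on the web's trivalent vertices, not just on a linear order. The first approach to try is to show that the components of $w_{i,i+1}(c)$ meeting a given radial cross-section behave like the Shareshian--Wachs runs. If this fails, a fallback is available: prove only that $X_w$ satisfies the local relations and equals $\Phi(w)$, which yields symmetry indirectly. This fallback does not, however, give the explicit involution that the theorem asks for.
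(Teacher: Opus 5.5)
\textbf{There is a genuine gap in Step 1, which is the core of the theorem.} You never actually define an involution.

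\emph{The proposed involution is not defined.} Pairing ``free'' components by their order along the annulus and reversing a run $i^a(i+1)^b\mapsto i^b(i+1)^a$ is the Bender--Knuth move on a tableau row. A coloring of a web, however, carries no linear word of free letters to reverse.

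\emph{The decomposition does not match the geometry.} The edges carrying exactly one of $i,i+1$ always close up into circles. Every place where $i$ and $i+1$ trade places is a vertex incident to an edge carrying both colors. So your ``frozen'' and ``free'' material is glued together rather than forming separate pieces.

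\emph{Preservation of $\inv_w$ is never shown.} It also cannot be verified run by run. The signed $i$-versus-$(i+1)$ merge contributions along a single curve do not cancel locally; they cancel only globally, for a topological reason. Your fallback would give symmetry, but not the involution the statement asserts.

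\textbf{The missing idea.} Let $\Gamma_{i,i+1}(c)$ be the union of the edges whose color set contains exactly one of $i,i+1$, and reverse the orientation of the $i$-edges. The vertex condition makes $\Gamma_{i,i+1}(c)$ a disjoint union of embedded oriented circles.

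Swap $i\leftrightarrow i+1$ along exactly those components $g$ with nonzero winding number $\omega(g)$. This is the web analogue of Shareshian--Wachs swapping all odd components; no run reversal is involved. It is an involution because $\Gamma_{i,i+1}$ and its essential part are unchanged by the swap.

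\emph{The $x$-weight.} Swapping along $g$ changes the number of $i$-loops (the total winding number of $\Gamma_i(c)$) by $\omega(g)$. So leaving the inessential components alone is harmless, and $x_i,x_{i+1}$ are interchanged.

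\emph{The inversion.} Swapping along $g$ changes $\inv_w$ by $2\sum_m\epsilon(m)$. The sum runs over the merges of $g$ where $i$ and $i+1$ meet, with $\epsilon(m)=1$ exactly when $i$ enters from the right, and $\epsilon(s)$ is defined likewise at such splits. Tracking which component of $\Gamma_i(c)$ you lie on while traversing $g$ gives $\sum_m\epsilon(m)=\sum_s\epsilon(s)$. The turning tangent theorem, with these vertices isotoped to vertical cusps of exterior angle $\epsilon\pi$, gives $\sum_m\epsilon(m)+\sum_s\epsilon(s)=2(o(g)-\omega(g))$. Hence $\sum_m\epsilon(m)=o(g)-\omega(g)=0$ for essential $g$.

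\textbf{Step 3 takes a different route from the paper.} The paper uses braid closures. The crossing resolution reproduces the Hecke action on the permutation module $M^\mu$ in the basis $(-q)^{-\inv(\mathbf a)}v_{\mathbf a}$, so $[x^\mu]X_{\widehat b}=\operatorname{Tr}_{M^\mu}(b)$. Symmetry upgrades this to $[m_\mu]$, and Young's rule finishes.

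Your approach can work and does not need symmetry in advance. But it rests on two things you only gesture at:
\begin{itemize}
\item the square-switch relation for $X_w$, which is a telescoping identity rather than a bijection;
\item a precise theorem that positive annular webs reduce to concentric essential circles modulo MOY relations and annular isotopy (e.g.\ via skew Howe duality and semisimplicity of the $q$-Schur algebra).
\end{itemize}
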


    We next present a combinatorial description of the hook Schur coefficients of $X_w$ in terms of $w$-tableaux, defined in Section~\ref{sec.Schur}. The proof combines the Jacobi--Trudi identity with a sign-reversing, weight-preserving involution on web arrays. This is analogous to the tableau formulas of Gasharov~\cite{Gas96} and Shareshian--Wachs~\cite{SW16}, although it is restricted to hook shapes.
    \begin{theoremA}
    \label{thm.mainB}
        Let $w$ be an annular web of degree $n$. For every hook partition $\lambda\vdash n$,
        \[
            [s_\lambda]X_w = \sum_{T\in\mathcal T_\lambda(w)} q^{\inv_w(T)} \in\mathbb Z_{\geq0}[q^{\pm1}],
        \]
        where $\mathcal T_\lambda(w)$ is the set of $w$-tableaux of
        shape $\lambda$.
    \end{theoremA}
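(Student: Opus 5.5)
We follow the strategy already indicated in the introduction: expand $[s_\lambda]X_w$ via Jacobi--Trudi, interpret the resulting alternating sum as a signed count of web arrays, and cancel with a sign-reversing involution that fixes only $w$-tableaux. Since $X_w$ is symmetric by Theorem~\ref{thm.mainA}, we have $[s_\lambda]X_w=\langle X_w,s_\lambda\rangle$. By the dual Jacobi--Trudi identity, $s_\lambda=\det(h_{\lambda_i-i+j})$, so
\[
[s_\lambda]X_w=\sum_{\sigma\in S_\ell}\sgn(\sigma)\,[m_{\lambda+\delta-\sigma(\delta)}]X_w ,
\]
using that $\langle X_w,h_\mu\rangle$ is the monomial coefficient of $x^\mu$. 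By the definition of $X_w$, $[x^\mu]X_w=\sum q^{\inv_w(c)}$ over proper web colorings $c\in\mathcal C(w)$ with exactly $\mu_i$ loops of color $i$. For a hook $\lambda=(a,1^{b})$ the permutations $\sigma$ that contribute are constrained, since most entries of the matrix vanish. The signed sum is therefore over pairs $(\sigma,c)$. We encode each pair as a \emph{web array}: the loops of color $i$ are arranged in row $i$, shifted according to $\sigma$, and we keep track of the order data that $\inv_w$ records.

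Next we construct a sign-reversing, $\inv_w$-preserving involution on web arrays. This involution is modeled on Gasharov's, and it reuses the local swap from the Shareshian--Wachs-type involution of Theorem~\ref{thm.mainA}, which exchanges colors $i$ and $i+1$ while preserving $\inv_w$. Given an array that is not a $w$-tableau, we locate the first bad position, meaning a pair of adjacent rows violating the tableau condition. We then apply the color-swap to the portion of the coloring beyond that position, which composes $\sigma$ with the transposition $(i\ i{+}1)$. For hook shapes only one column has length exceeding $1$, so bad positions occur either in the first column or in the interaction between the arm and the leg. This restriction is why we expect a simpler first-violation rule to work here than in full generality. The fixed points should be precisely the arrays with $\sigma=\mathrm{id}$ that satisfy the $w$-tableau conditions of Section~\ref{sec.Schur}. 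These are $\mathcal T_\lambda(w)$, and each contributes $+q^{\inv_w(T)}$, which gives the formula. Positivity in $\mathbb Z_{\ge0}[q^{\pm1}]$ is then immediate.

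The main obstacle is showing that the partial swap restricted to a region preserves $\inv_w$. In the Shareshian--Wachs setting, a swap of a $\{i,i+1\}$-chain component preserves inversions because components are isolated. In webs, however, loops are entangled through thick edges, and $\inv_w$ carries signs. We would try to define the swap only on whole connected components of the $\{i,i+1\}$-subweb that lie after the violation point, using the same components as in the proof of Theorem~\ref{thm.mainA}. We would then check two things: that $\inv_w$ changes by zero across component boundaries, and that the bad position is unchanged by the swap, which makes the map an involution. If this fails in general, the hook restriction lets us perform the swap only between row $1$ and the leg rows. That case should reduce to the $\{i,i+1\}$ involution already available.
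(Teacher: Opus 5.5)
Your frame is the paper's: Jacobi--Trudi, pairs $(\sigma,c)$ read as $w$-arrays of shape $\sigma\cdot\lambda$, a sign-reversing involution built from a localized Shareshian--Wachs swap, and fixed points $\calT_\lambda(w)$. But the involution, which is the entire content of the proof, is never defined, and neither candidate you offer supplies it.

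Pairing $\sigma$ with $\sigma\circ(i-1\ i)$ requires changing the row lengths in positions $i-1,i$ from $(\alpha_{i-1},\alpha_i)$ to $(\alpha_i-1,\alpha_{i-1}+1)$. For example, with $\lambda=(3,1,1)$, arrays of shape $(0,0,5)$ (coming from a $3$-cycle) must cancel against arrays of shape $(0,4,1)$. The global $\SW_{i,i+1}$ of Theorem~\ref{thm.SW} only exchanges $\alpha_i$ and $\alpha_{i+1}$. It therefore preserves the multiset of row lengths and can never produce such pairs, so your fallback ``reduce to the $\{i,i+1\}$ involution'' fails.

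What is needed is the offset tail swap $\SW_{(i,j)}$. Take $A=\bigsqcup_{k\ge j}T_{i-1,k}$ and $B=\bigsqcup_{k>j}T_{i,k}$, so that the pivot loop $T_{i,j}$ stays in row $i$. Then exchange $A$ and $B$ along the nonzero-winding components of their symmetric difference $\Gamma_{A,B}$. Your other suggestion, swapping ``whole components of the $\{i,i+1\}$-subweb that lie after the violation point,'' is not well defined as stated. Those components are assembled from all loops of both colors, the pivot included, and need not lie on one side of it.

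The idea that makes hooks tractable, and that your proposal lacks, is Lemma~\ref{lem:hook-composition}: for a hook $\lambda$, every row $i\ge2$ of $\sigma\cdot\lambda$ with more than one box is immediately preceded by an empty row. Hence all admissible cells lie in column $1$, and at $(i,1)$ one of $A,B$ is empty. Either row $i-1$ is empty and $T_{i,2},T_{i,3},\dots$ move up, or row $i$ is the single loop $T_{i,1}$ and all of row $i-1$ moves down behind it, which is legal exactly when $T_{i,1}\prec T_{i-1,1}$. In both cases the moved loops are disjoint from $T_{i,1}$, the only loop of the other color involved. So no merge carries colors $i-1$ and $i$ on opposite inputs before or after the move, and $\inv_w$ is preserved. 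Your ``main obstacle'' disappears, but only through this lemma.

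Two further steps are missing.
\begin{itemize}
\item The pivot must be the \emph{largest} invertible row index $I$, since rows below $I$ and the cell $(I,1)$ are untouched by the move. A top-down ``first bad position'' is not stable. The array of shape $(0,0,5)$ goes to one of shape $(0,4,1)$ whose row $1$ is empty, and that array would next be sent to shape $(3,1,1)$ rather than back.
\item You must show every $(\sigma,T)$ with $\sigma\ne\mathrm{id}$ is paired. In that case $\sigma\cdot\lambda$ has an empty row, since otherwise the lemma forces $\sigma\cdot\lambda=\lambda$ and hence $\sigma=\mathrm{id}$. Its last row $\lambda_{\sigma(\ell)}+\ell-\sigma(\ell)$ is positive, so some empty row is followed by a nonempty one. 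For $\sigma=\mathrm{id}$ the unpaired arrays are then exactly the non-invertible ones, which is how $\calT_\lambda(w)$ is defined.
\end{itemize}
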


Finally, we introduce a \emph{web LLT function} as a normalized generating function of \emph{arbitrary} colorings of labeled loop decompositions. This construction extends unicellular LLT polynomials, a special case of the symmetric functions
introduced by Lascoux, Leclerc, and Thibon~\cite{LLT97}.
Carlsson and Mellit~\cite{CM18} related unicellular LLT polynomials
to chromatic quasisymmetric functions by plethystic substitution.
Building on the thin-boundary identity of Kim and the first
author~\cite{KO26prep}, we extend this relation to annular webs:
\[
    \LLT_w[\mathbf x;q]
    =
    (q-q^{-1})^n
    X_w\!\left[\frac{\mathbf x}{q-q^{-1}};q\right],
    \qquad n=\deg(w).
\]
Together with the HOMFLY--PT specialization~\cite[Proposition~2.3]{GW23},
this gives
\[
    P_w(a,q)
    =\frac{\LLT_w[a-a^{-1};q]}{(q-q^{-1})^n},
\]
where $P_w$ is the (framed) HOMFLY--PT polynomial of $w$. 
By expressing the hook coefficients of $\LLT_w$ in terms of standard Young tableaux, we obtain the following combinatorial formula for the coefficients of $P_w(a,q)$. The notation appearing below that has not yet been defined is introduced in Section~\ref{sec:homfly-web}.

\begin{theoremA}
\label{thm.mainC}
Let $w$ be an annular web of degree $n$, and let $I$ be a
radial cut with initial thickness $\alpha$. Then for $0\leq k\leq n$,
the coefficient of $a^{2k-n}$ in $P_w$ is given by
\[
    [a^{2k-n}]P_w
    =
    \frac{(-1)^{n-k}
        \bigl(b_k(w,I;q)+b_{k+1}(w,I;q)\bigr)}
         {[\alpha]!(q-q^{-1})^n},
\]
where $b_0=b_{n+1}=0$ and for $1 \leq j \leq n$,
\[
    b_j(w,I;q)
    :=
    \sum_{\mathcal L\in\operatorname{SLD}(w,I)}
    \ \sum_{\tau\in\operatorname{SYT}(j,1^{n-j})}
        q^{\inv_{\mathcal L}(\tau)}.
\]
All other coefficients in $a$ vanish.
\end{theoremA}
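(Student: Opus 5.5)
The plan is to combine the two identities recorded just before the statement, namely
\[
P_w(a,q)=\frac{\LLT_w[a-a^{-1};q]}{(q-q^{-1})^n}
\]
and a hook-Schur expansion of $\LLT_w$, with the classical evaluation of Schur functions at the superalphabet $a-a^{-1}$. Concretely, I will first establish
\[
[s_\lambda]\LLT_w=\frac{b_j(w,I;q)}{[\alpha]!}\qquad\text{for } \lambda=(j,1^{n-j}),\ 1\le j\le n,
\]
and then show that in $s_\lambda[a-a^{-1}]$ only hook shapes survive.

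For the first step, $\LLT_w$ is by definition the normalized generating function of arbitrary colorings of labeled loop decompositions. The normalization by $[\alpha]!$ will come from the labelings along the radial cut $I$ of initial thickness $\alpha$. Fixing $I$ and passing to \emph{standard} loop decompositions $\operatorname{SLD}(w,I)$ should make $\LLT_w$ equal to $\frac{1}{[\alpha]!}\sum_{\mathcal L}\LLT_{\mathcal L}$. Here each $\LLT_{\mathcal L}$ is a unicellular-type LLT function attached to the cyclic word of loops, whose inversion statistic is $\inv_{\mathcal L}$.

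For each $\mathcal L$ I then need the hook coefficient of $\LLT_{\mathcal L}$ to be the sum of $q^{\inv_{\mathcal L}(\tau)}$ over $\tau\in\operatorname{SYT}(j,1^{n-j})$. I plan to prove this by Gessel's fundamental-quasisymmetric expansion: group colorings by their standardization, a permutation $\sigma$ carrying the statistic $q^{\inv_{\mathcal L}(\sigma)}$, so that $\LLT_{\mathcal L}=\sum_\sigma q^{\inv_{\mathcal L}(\sigma)}F_{\mathrm{Des}(\sigma^{-1})}$. Since $\LLT_{\mathcal L}$ is symmetric (via the plethystic relation and Theorem~\ref{thm.mainA}), the coefficient of $s_{(j,1^{n-j})}$ in a symmetric function with a given $F$-expansion can be read off. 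For hooks, $F_D$ with $D=\{1,\dots,n-j\}$ occurs only in $s_{(j,1^{n-j})}$ and in no other $s_\mu$ with hook shape, though other shapes contribute. So I would instead use the standard hook-extraction argument: the permutations with descent set exactly $\{1,\dots,n-j\}$ biject with $\operatorname{SYT}(j,1^{n-j})$, and symmetry implies that this count equals the hook Schur coefficient. This is the argument of Theorem~\ref{thm.mainB} transplanted to the LLT setting, or alternatively obtained by applying the plethysm to Theorem~\ref{thm.mainB}.

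For the second step, $s_\lambda[a-a^{-1}]$ is the supersymmetric Schur function in one even variable $a$ and one odd variable $a^{-1}$, with sign. It vanishes unless $\lambda$ fits in the $(1|1)$-hook, i.e.\ $\lambda=(j,1^{n-j})$. For the hook,
\[
s_{(j,1^{n-j})}[a-a^{-1}]=(-1)^{n-j}\left(a^{j}a^{-(n-j)}+a^{j-1}a^{-(n-j+1)}\right)=(-1)^{n-j}\left(a^{2j-n}+a^{2j-n-2}\right).
\]
Summing $b_j/[\alpha]!$ times this over $j$ and collecting the coefficient of $a^{2k-n}$ picks up $j=k$ (first term) and $j=k+1$ (second term, with sign $(-1)^{n-k-1}$). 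This leaves a sign discrepancy between the two contributions, so I must fix the sign convention: the relevant plethysm is $p_r\mapsto a^r-(-a^{-1})^r$ type versus $a^r-a^{-r}$, and I would recheck it against $n=1$, where $P=(a-a^{-1})/(q-q^{-1})$ forces $s_1[a-a^{-1}]=a-a^{-1}$. With the convention $p_r[a-a^{-1}]=a^r-a^{-r}$, the hook evaluates to $(-1)^{n-j}a^{2j-n}\cdot(1-a^{-2})$ up to sign adjustment, which gives $(-1)^{n-k}(b_k+b_{k+1})$ after collecting. The boundary conventions $b_0=b_{n+1}=0$ handle $k=0,n$, and the vanishing of other powers of $a$ follows since only exponents $2k-n$ appear.

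I expect the main obstacle to be the first step, specifically justifying the $[\alpha]!$ normalization. That requires showing that the sum over labeled loop decompositions reduces, for a fixed radial cut, to standard ones with the $q$-factorial arising from permuting labels across $I$, and that the resulting statistic $\inv_{\mathcal L}$ is compatible with the standardization bijection.
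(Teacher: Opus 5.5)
Your outline follows the paper's proof. You use $P_w=\LLT_w[a-a^{-1};q]/(q-q^{-1})^n$ and the evaluation $s_{(j,1^{n-j})}[a-a^{-1}]=(-1)^{n-j}(a^{2j-n}-a^{2j-n-2})$. Your corrected form of this evaluation is \eqref{eq:hook-schur-specialization}; your first display has the wrong sign on the second term and should be dropped. You then collect the contributions $j=k$ and $j=k+1$.

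The $[\alpha]!$ normalization you flag as the main obstacle is not one. $\LLT_{w,I}$ is \emph{defined} as $\frac{1}{[t(I)]!}\sum_{\mathcal L\in\operatorname{SLD}(w,I)}\sum_\sigma q^{\inv_{\mathcal L}(\sigma)}x^\sigma$. The only nontrivial input is its symmetry, which Proposition~\ref{prop:web-llt-plethysm} supplies.

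The real difference is how you get the hook coefficients of $\LLT_w$. The paper (Theorem~\ref{thm:web-llt-hook}) runs Jacobi--Trudi with a sign-reversing involution on arrays of shape $\sigma\cdot\lambda$ that preserves the decreasing row word, parallel to the proof of Theorem~\ref{thm.mainB}. You instead expand in Gessel's fundamental basis and read off one coefficient. That route is shorter and workable, but as written it contains two errors.

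\textbf{First error: a single $\mathcal L$ does not give a symmetric function.} Let $\LLT_{\mathcal L}=\sum_\sigma q^{\inv_{\mathcal L}(\sigma)}x^\sigma$ for one fixed $\mathcal L$. This need not be symmetric, and neither the plethystic relation nor Theorem~\ref{thm.mainA} says anything about an individual $\mathcal L$ (compare the nonsymmetric summands in Section~\ref{sec.comparison}). Here is an example. Take three unit strands and insert merge--split pairs at strand positions $(2,3),(2,3),(1,2),(2,3)$. Label them so that the merges see the attacks $(2,3),(2,3),(1,3),(3,2)$. Since $\epsilon_\sigma(3,2)=-\epsilon_\sigma(2,3)$, ties included, the statistic reduces to $\epsilon_\sigma(1,3)+\epsilon_\sigma(2,3)$. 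Then $[x_1^2x_2]=q^2+2q^{-2}$, while $[x_1x_2^2]=2+q^{-2}$. So you must expand the whole sum $[\alpha]!\,\LLT_{w,I}=\sum_{\mathcal L}\LLT_{\mathcal L}$ and extract the coefficient there, which linearity allows.

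\textbf{Second error: the extraction step is misstated.} The fundamental you read off does \emph{not} occur in other Schur functions; if it did, symmetry would not rescue the argument. The fact you need is this: the fundamental attached to the tableau with first row $1,\dots,j$ and first column $1,j+1,\dots,n$ occurs in $s_\mu$ only for $\mu=(j,1^{n-j})$, and exactly once.

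You also need the right bijection with tableaux. Ties are broken by decreasing label, so colorings that standardize to $\tau$ give the fundamental strict exactly at the ascents of the word $\tau^{-1}$. The permutations contributing to that fundamental are therefore those with $\tau^{-1}$ decreasing in positions $1,\dots,j$ and increasing afterwards. These are exactly $\tau^{-1}=\mathrm{rw}(T)$ with $T\in\mathrm{SYT}(j,1^{n-j})$, and \eqref{eq:array-standardization} gives $\inv_{\mathcal L}(T)=\inv_{\mathcal L}(\tau_T)$. This specific bijection is what turns the coefficient into $b_j$ as defined. An arbitrary bijection between permutations with descent set $\{1,\dots,n-j\}$ and $\mathrm{SYT}(j,1^{n-j})$ does not transport $\inv_{\mathcal L}$.

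Finally, applying the plethysm to Theorem~\ref{thm.mainB} is not an alternative route. The hook coefficients of $\LLT_w$ depend on the non-hook Schur coefficients of $X_w$, which Theorem~\ref{thm.mainB} does not give.
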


\subsection{Organization}

In Section~\ref{sec.prelim}, we recall some basic definitions and facts about symmetric functions, annular links, and annular webs.
In Section~\ref{sec.chrom}, we introduce the chromatic symmetric function \(X_w\), prove its symmetry, and establish its compatibility with the skein-theoretic construction.
In Section~\ref{sec.Schur}, we prove the hook tableau formula.
In Section~\ref{sec:homfly-web}, we develop web LLT functions and derive the HOMFLY--PT coefficient formula.
Finally, in Section~\ref{sec.comparison}, we compare the chromatic symmetric functions associated with webs and graphs.

\section{Preliminaries}
\label{sec.prelim}

\subsection{Symmetric functions}
\label{sec.sym}

A \emph{weak composition} $\alpha$ of a positive integer $n$ is a finite sequence of nonnegative integers whose sum is $n$. We write $\alpha \models_0 n$, denote its $i$-th \emph{part} by $\alpha_i$, and denote its \emph{length} by $\ell(\alpha)$. We often identify $\alpha$ with its \emph{Young diagram}
\[
    \alpha=\{(i,j)\in \mathbb{Z}_{>0}\times \mathbb{Z}_{>0} \mid 1\leq i\leq \ell(\alpha),\ 1\leq j\leq \alpha_i\}.
\]
A \emph{composition} $\alpha$ of $n$, denoted by $\alpha \models n$, is a weak composition of $n$ whose parts are all positive.
For a composition $\alpha = (\alpha_1,\ldots, \alpha_r)$, we define its $q$-factorial by
$$
[\alpha]! = [\alpha_1]! \cdots [\alpha_r]! \quad \textrm{where} \quad [\alpha_i]!=[1]\cdots [\alpha_i], \quad [k] =\dfrac{q^k-q^{-k}}{q-q^{-1}}\,.
$$
A \emph{partition} $\lambda$ of $n$, denoted by $\lambda \vdash n$, is a composition of $n$ whose parts are weakly decreasing.
A partition is called a \emph{hook} if it is of the form $(k,1,\ldots,1)=(k,1^{n-k})$. 

Let $\Lambda_q$ be the algebra of symmetric functions in infinitely many variables $x_1,x_2,\dots$ over $\BZ[q^{\pm 1}]$. We use the standard notation for its usual bases: $e_\lambda$, $h_\lambda$, $m_\lambda$, and $s_\lambda$ denote the \emph{elementary, complete homogeneous, monomial,} and \emph{Schur symmetric functions}, respectively. In each case, the elements indexed by $\lambda \vdash n$ form a basis of the degree-$n$ part of $\Lambda_q$. The algebra $\Lambda_q$ is equipped with the \emph{Hall inner product} $\langle \cdot,\cdot \rangle$, which satisfies
\begin{equation*}
    \langle s_\lambda,s_\mu\rangle=\delta_{\lambda\mu}
    \quad\text{and}\quad
    \langle m_\lambda,h_\mu\rangle=\delta_{\lambda\mu}     
\end{equation*}
for any partitions $\lambda$ and $\mu$. Here $\delta_{\lambda\mu}$ denotes the Kronecker delta; that is, $\delta_{\lambda\mu}=1$ if $\lambda=\mu$, and $\delta_{\lambda\mu}=0$ otherwise.
    
A Schur symmetric function can be expressed as a signed sum of complete homogeneous symmetric functions via the Jacobi--Trudi formula:
\begin{equation}
\label{eq: Jacobi--Trudi}
    s_\lambda
    = \det \big( h_{\lambda_i+j-i} \big)_{i,j=1}^{\ell(\lambda)}
    = \sum_{\sigma \in S_{\ell(\lambda)}} \sgn(\sigma) \!\! \prod_{1 \leq j \leq \ell(\lambda)} \! \! h_{\lambda_{\sigma(j)}+j-\sigma(j)}
    = \sum_{\sigma \in S_{\ell(\lambda)}} \sgn(\sigma)\, h_{\sigma \cdot \lambda}
\end{equation}
where, in the last equality, 
\[
    {\sigma\cdot\lambda} := \big(\lambda_{\sigma(j)}+j-\sigma(j)\big)_{j=1}^{\ell(\lambda)} .
\]
Since $h_i=0$ for any $i<0$, the sums in~\eqref{eq: Jacobi--Trudi} may be regarded as being taken over those $\sigma$ for which $\lambda_{\sigma(j)}+j-\sigma(j)\geq 0$ for all $1\leq j\leq \ell(\lambda)$, or equivalently, for which $\sigma\cdot\lambda$ is a weak composition.

\subsection{Annular links}
\label{sec.skein}
    
Throughout the paper, we identify the annulus $\BA$ with  
\[
    \BA = [0,1]\times[0,1]/_\sim 
\]
where $(x,0)\sim (x,1)$ for all $0\leq x\leq 1$, and identify its first homology group $H_1(\BA;\mathbb{Z})$ with $\mathbb{Z}$ so that an upward-oriented vertical line represents $+1$.

By an \emph{annular link} $L$, we mean an oriented link in $\BA\times[0,1]$ represented by an annular diagram in which every strand is everywhere oriented upward. We refer to the homology class of $L$ in $H_1(\BA;\BZ)\simeq\BZ$ as the \emph{degree} of $L$.
    
The \emph{positive part of the skein algebra of the annulus}, denoted by $\Sk^{+}(\BA)$, is defined as the $\mathbb{Z}[q^{\pm1}]$-module generated by annular links, up to isotopy, modulo the skein relation as in  Figure~\ref{fig.skein}. The algebra structure is given by stacking links. The fact that stacking two copies of $\BA\times[0,1]$, one above the other, is isotopic to placing them side by side in the annulus, one inside and one outside, implies that $\Sk^+(\BA)$  is a commutative algebra.
\begin{figure}[!htbp]
    \centering
\begingroup%
  \makeatletter%
  \providecommand\color[2][]{%
    \errmessage{(Inkscape) Color is used for the text in Inkscape, but the package 'color.sty' is not loaded}%
    \renewcommand\color[2][]{}%
  }%
  \providecommand\transparent[1]{%
    \errmessage{(Inkscape) Transparency is used (non-zero) for the text in Inkscape, but the package 'transparent.sty' is not loaded}%
    \renewcommand\transparent[1]{}%
  }%
  \providecommand\rotatebox[2]{#2}%
  \newcommand*\fsize{\dimexpr\f@size pt\relax}%
  \newcommand*\lineheight[1]{\fontsize{\fsize}{#1\fsize}\selectfont}%
  \ifx\svgwidth\undefined%
    \setlength{\unitlength}{202.95969565bp}%
    \ifx\svgscale\undefined%
      \relax%
    \else%
      \setlength{\unitlength}{\unitlength * \real{\svgscale}}%
    \fi%
  \else%
    \setlength{\unitlength}{\svgwidth}%
  \fi%
  \global\let\svgwidth\undefined%
  \global\let\svgscale\undefined%
  \makeatother%
  \begin{picture}(1,0.22351376)%
    \lineheight{1}%
    \setlength\tabcolsep{0pt}%
    \put(0,0){\includegraphics[width=\unitlength,page=1]{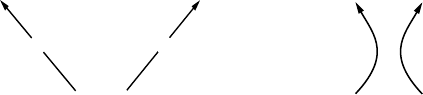}}%
    \put(0.21375331,0.10262678){\color[rgb]{0,0,0}\makebox(0,0)[lt]{\lineheight{1.25}\smash{\begin{tabular}[t]{l}$-$\end{tabular}}}}%
    \put(0.50008255,0.10346796){\color[rgb]{0,0,0}\makebox(0,0)[lt]{\lineheight{1.25}\smash{\begin{tabular}[t]{l}$=$\end{tabular}}}}%
    \put(0.59707314,0.10305537){\color[rgb]{0,0,0}\makebox(0,0)[lt]{\lineheight{1.25}\smash{\begin{tabular}[t]{l}$(q-q^{-1})$\end{tabular}}}}%
    \put(0,0){\includegraphics[width=\unitlength,page=2]{skein.pdf}}%
  \end{picture}%
\endgroup%

    \caption{The skein relation.}
    \label{fig.skein}
\end{figure}
    
It is well known, as first observed in \cite{Tur88}, that the algebra $\Sk^+(\BA)$
is isomorphic to the algebra $\Lambda_q$ of symmetric functions:
\begin{equation}
\label{eqn.Turaev}
    \Sk^{+}(\BA) \overset{\simeq}{\longrightarrow} \Lambda_q,   \quad L \longmapsto \calX_L \,.
\end{equation}
Explicitly, if $b$ is an $n$-strand braid, then its closure $\widehat{b}$ is mapped to
\begin{equation}
\label{eqn.turaev}
    \calX_{\widehat{b}} = \sum_{\lambda \vdash n} \operatorname{Tr}_{V_{\lambda}}(b)\, s_\lambda \,.
\end{equation}
Here $b$ is regarded as an element of the Hecke algebra $H_n$ of type $A_{n-1}$, and $V_{\lambda}$ denotes the irreducible representation of $H_n$ associated with the partition $\lambda$. In particular, this isomorphism preserves degrees. 

After extending the base ring $\BZ[q^{\pm1}]$ so that all $q$-integers are invertible, one can extend the correspondence~\eqref{eqn.Turaev} to \emph{thickened} annular links, where each component is assigned a positive integer. A component labeled by $k$ is interpreted as $k$ parallel copies of the component with the anti-$q$-symmetrizer inserted. The anti-$q$-symmetrizer on $k$ strands is given by
\[
    \frac{1}{\sum_{\sigma \in S_k} (-q)^{-2\ell(b_\sigma)}} \sum_{\sigma \in S_k} (-q)^{-\ell(b_\sigma)} b_\sigma 
\]
where $b_\sigma$ is a positive braid representative of $\sigma$ and $\ell(b_\sigma)$ denotes its length. For instance,  Figure~\ref{fig.sym2} shows the anti-$q$-symmetrizer on two strands. In this way, a thickened annular link can be regarded as a linear combination of annular links, and hence as an element of $\Sk^+(\BA)$.

\begin{figure}[!htbp]
    \centering
\begingroup%
  \makeatletter%
  \providecommand\color[2][]{%
    \errmessage{(Inkscape) Color is used for the text in Inkscape, but the package 'color.sty' is not loaded}%
    \renewcommand\color[2][]{}%
  }%
  \providecommand\transparent[1]{%
    \errmessage{(Inkscape) Transparency is used (non-zero) for the text in Inkscape, but the package 'transparent.sty' is not loaded}%
    \renewcommand\transparent[1]{}%
  }%
  \providecommand\rotatebox[2]{#2}%
  \newcommand*\fsize{\dimexpr\f@size pt\relax}%
  \newcommand*\lineheight[1]{\fontsize{\fsize}{#1\fsize}\selectfont}%
  \ifx\svgwidth\undefined%
    \setlength{\unitlength}{270.74027955bp}%
    \ifx\svgscale\undefined%
      \relax%
    \else%
      \setlength{\unitlength}{\unitlength * \real{\svgscale}}%
    \fi%
  \else%
    \setlength{\unitlength}{\svgwidth}%
  \fi%
  \global\let\svgwidth\undefined%
  \global\let\svgscale\undefined%
  \makeatother%
  \begin{picture}(1,0.16322293)%
    \lineheight{1}%
    \setlength\tabcolsep{0pt}%
    \put(0,0){\includegraphics[width=\unitlength,page=1]{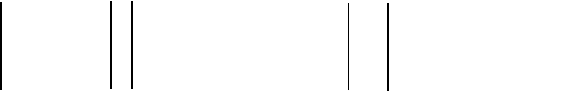}}%
    \put(0.01466498,0.1170255){\makebox(0,0)[lt]{\lineheight{1.25}\smash{\begin{tabular}[t]{l}$2$\end{tabular}}}}%
    \put(0.38707087,0.07769837){\makebox(0,0)[lt]{\lineheight{1.25}\smash{\begin{tabular}[t]{l}$\displaystyle\frac{1}{1+q^{-2}}$\end{tabular}}}}%
    \put(0.71631828,0.07672368){\makebox(0,0)[lt]{\lineheight{1.25}\smash{\begin{tabular}[t]{l}$-q^{-1}$\end{tabular}}}}%
    \put(0,0){\includegraphics[width=\unitlength,page=2]{sym2.pdf}}%
    \put(0.08275057,0.07579564){\makebox(0,0)[lt]{\lineheight{1.25}\smash{\begin{tabular}[t]{l}$=$\end{tabular}}}}%
    \put(0.31350038,0.07592233){\makebox(0,0)[lt]{\lineheight{1.25}\smash{\begin{tabular}[t]{l}$=$\end{tabular}}}}%
    \put(0,0){\includegraphics[width=\unitlength,page=3]{sym2.pdf}}%
  \end{picture}%
\endgroup%

    \caption{The anti-$q$-symmetrizer on two strands.}
    \label{fig.sym2}
\end{figure}

\subsection{Annular webs}
\label{sec.webs}

By an \emph{annular web} $w$, we mean a finite trivalent graph embedded in $\BA$, whose edges are all oriented upward, together with a \emph{thickness function} assigning a positive integer $t(e)$ to each edge $e$. We require that at every vertex the total incoming thickness equals the total outgoing thickness. Every vertex of $w$ has either two incoming edges and one outgoing edge, or one incoming edge and two outgoing edges. We call the former a \emph{merge} and the latter a \emph{split}. Since the total thickness is preserved at every vertex, the \emph{degree} of $w$ is defined by
\begin{equation*}
    \deg(w) := \sum_{e\cap I\neq \emptyset} t(e)
\end{equation*}
where $I$ is any radial segment in $\BA$ transverse to $w$.

An annular web $w$ may be regarded as an element of $\Sk^+(\BA)$ by extending the base ring so that all $q$-integers are invertible and inserting the anti-$q$-symmetrizer at every merge and split as illustrated in Figure~\ref{fig.symmetrizer}.
In this way, an annular web can be regarded as a linear combination of annular links. In particular, the correspondence~\eqref{eqn.Turaev} associates a symmetric function $\calX_w$ to an annular web $w$. Note that the degree of $\calX_w$ is equal to $\deg(w)$.
    
\begin{figure}[!htbp]
    \centering
    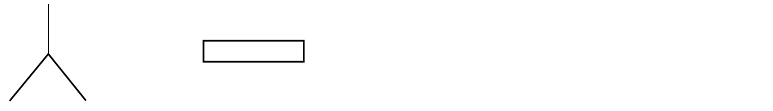
    \caption{Resolutions of a merge and a split.}
    \label{fig.symmetrizer}
\end{figure}

Conversely, an annular link can be expressed as a linear combination of annular webs. This can be done by resolving every crossing as in Figure~\ref{fig.skweb}, which follows by combining the relations in Figures~\ref{fig.sym2} and~\ref{fig.symmetrizer}. This crossing resolution is compatible with the skein relation in Figure~\ref{fig.skein}.

\begin{figure}[!htbp]
    \centering
    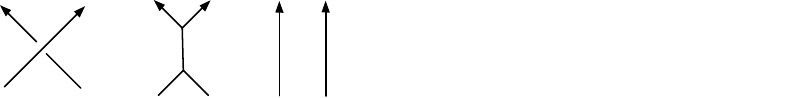
    \caption{Resolution of a crossing.}
    \label{fig.skweb}
\end{figure}

\section{Chromatic symmetric functions for annular webs}
\label{sec.chrom}

In this section, we give a combinatorial definition of the chromatic symmetric function of an annular web and prove the properties stated in the introduction.

\subsection{Definitions}

Let $w$ be an annular web. A  \emph{(proper) coloring} $c$ of $w$ is a function assigning to each edge $e$ a finite subset $c(e)\subset\mathbb{N}$ such that $|c(e)|=t(e)$, and at every vertex, the union of the colors on the incoming edges agrees with the union of the colors on the outgoing edges. For instance, at each merge, the union of the colors on the two incoming edges is equal to the set of colors on the outgoing edge, and hence the two sets of colors on the incoming edges are disjoint.
    
To define a chromatic symmetric function of $w$, we introduce an inversion statistic and an $x$-weight associated with a coloring $c$ as follows. 
    
At each merge $m$ of $w$, let $e_{\mathrm{L}}$ and $e_{\mathrm{R}}$ denote the left and right incoming edges, respectively. We set
$\inv_m(c) := \inv(c(e_\mathrm{R}),c(e_\mathrm{L}))$, where $\inv(A,B)$ for finite subsets $A,B \subseteq \BN$ is defined as 
\[
    \inv(A,B) := \sum_{a\in A,\, b\in B} \sgn(a-b),
\]
and  define the \emph{(signed) inversion} of the coloring $c$ by 
\begin{equation}
    \label{eq:def of inv}
    \inv_w(c)=\sum_{m : \textrm{merge}} \inv_m(c)  \,.
\end{equation}
Here the sum is taken over all merges of $w$.

For each $i \in \mathbb{N}$ let $\Gamma_i(c)$ be the union of edges $e$ of $w$ whose color $c(e)$ contains $i$. Then, by the coloring condition, $\Gamma_i(c)$ is a disjoint union of (positively oriented) circles. Denoting by $|\Gamma_i(c)|$ the number of its components, we define the \emph{$x$-weight} of $c$ as
\[
    x^c := \prod_{i \in \mathbb{N}} x_i^{|\Gamma_i(c)|}.
    \]
Note that its degree is equal to $\deg(w)$.
        
For instance, let $w$ be the annular web shown in the leftmost diagram of Figure~\ref{fig.SW}, with a coloring $c$ using two colors. If red and blue represent the colors $i$ and $i+1$, respectively, then its $x$-weight is $x^c=x_i^2x_{i+1}^4$, and $\inv_w(c)=0$: there are eight merges, four of which contribute $+1$ and the remaining four contribute $-1$.

\begin{definition} 
    \label{def.chromsym}
    For an annular web $w$, we define its \emph{chromatic symmetric function}  by
    \begin{equation}
        \label{eqn.chromsym}
        X_w[\mathbf x;q] := \sum_{c \in \mathcal{C}(w)} q^{\inv_w(c)} x^c 
    \end{equation}
    where $\calC(w)$ denotes the set of all colorings of $w$.
\end{definition}
It is immediate that $X_w$ is homogeneous of degree $\deg(w)$. However, its symmetry and its equality with the symmetric function $\calX_w$ introduced in Section~\ref{sec.webs} are not obvious. We establish these two properties in the next two subsections. In the process, we obtain Theorem~\ref{thm.mainA}.

\subsection{Shareshian--Wachs involution}
\label{sec.SW}
    
To prove the symmetry of $X_w$, it suffices to show that $X_w$ is invariant under interchanging two consecutive variables, say $x_i$ and $x_{i+1}$. For this purpose, we define an analogue of the Shareshian--Wachs involution \cite{SW16} with respect to the colors $i$ and $i+1$:
\begin{equation*}
    \SW_{i,i+1} : \calC(w) \rightarrow \calC(w).
\end{equation*}
This involution preserves the inversion weight and interchanges \(x_i\) and \(x_{i+1}\) in the \(x\)-weight, as shown in Theorem~\ref{thm.SW} below.
    
The construction of $\SW_{i,i+1}$ is as follows. For $c\in\calC(w)$, let $\Gamma_{i,i+1}(c)$ be the union of the edges $e$ of $w$ such that $c(e)$ contains exactly one of $i$ and $i+1$. We keep the orientation of the edges colored by $i+1$ and reverse the orientation of those colored by $i$, so that $\Gamma_{i,i+1}(c)$ becomes a disjoint union of oriented circles. Let $\Gamma_{i,i+1}^\ast(c)$ be the union of the components of $\Gamma_{i,i+1}(c)$ with nonzero winding number. We define $\SW_{i,i+1}(c)$ to be the coloring obtained from $c$ by swapping the colors $i$ and $i+1$ along the edges in $\Gamma_{i,i+1}^\ast(c)$. It follows from the construction that
\[
    \Gamma_{i,i+1}(c) = \Gamma_{i,i+1}(\SW_{i,i+1}(c)) \quad \text{and} \quad \Gamma^\ast_{i,i+1}(c) = \Gamma^\ast_{i,i+1}(\SW_{i,i+1}(c)),
\]
which implies that $\SW_{i,i+1}$ is an involution.

\begin{figure}[!htbp]
    \centering
\begingroup%
  \makeatletter%
  \providecommand\color[2][]{%
    \errmessage{(Inkscape) Color is used for the text in Inkscape, but the package 'color.sty' is not loaded}%
    \renewcommand\color[2][]{}%
  }%
  \providecommand\transparent[1]{%
    \errmessage{(Inkscape) Transparency is used (non-zero) for the text in Inkscape, but the package 'transparent.sty' is not loaded}%
    \renewcommand\transparent[1]{}%
  }%
  \providecommand\rotatebox[2]{#2}%
  \newcommand*\fsize{\dimexpr\f@size pt\relax}%
  \newcommand*\lineheight[1]{\fontsize{\fsize}{#1\fsize}\selectfont}%
  \ifx\svgwidth\undefined%
    \setlength{\unitlength}{409.40473902bp}%
    \ifx\svgscale\undefined%
      \relax%
    \else%
      \setlength{\unitlength}{\unitlength * \real{\svgscale}}%
    \fi%
  \else%
    \setlength{\unitlength}{\svgwidth}%
  \fi%
  \global\let\svgwidth\undefined%
  \global\let\svgscale\undefined%
  \makeatother%
  \begin{picture}(1,0.25139095)%
    \lineheight{1}%
    \setlength\tabcolsep{0pt}%
    \put(0,0){\includegraphics[width=\unitlength,page=1]{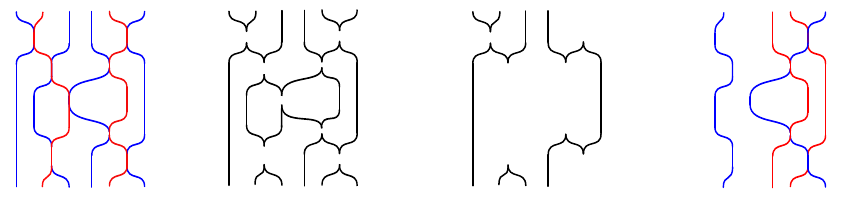}}%
    \put(0.0923634,-0.0060824){\color[rgb]{0,0,0}\makebox(0,0)[lt]{\lineheight{1.25}\smash{\begin{tabular}[t]{l}$c$\end{tabular}}}}%
    \put(0.58565136,-0.00656913){\color[rgb]{0,0,0}\makebox(0,0)[lt]{\lineheight{1.25}\smash{\begin{tabular}[t]{l}$\Gamma^\ast_{i,i+1}(c)$\end{tabular}}}}%
    \put(0.30601249,-0.00445921){\color[rgb]{0,0,0}\makebox(0,0)[lt]{\lineheight{1.25}\smash{\begin{tabular}[t]{l}$\Gamma_{i,i+1}(c)$\end{tabular}}}}%
    \put(0,0){\includegraphics[width=\unitlength,page=2]{SW.pdf}}%
    \put(0.8297986,-0.00481493){\color[rgb]{0,0,0}\makebox(0,0)[lt]{\lineheight{1.25}\smash{\begin{tabular}[t]{l}$SW_{i,i+1}(c)$\end{tabular}}}}%
  \end{picture}%
\endgroup%

    \caption{The SW involution: $\SW_{i,i+1}$ swaps the colors $i$ and $i+1$ on the edges in $\Gamma_{i,i+1}^\ast(c)$.}
    \label{fig.SW}
\end{figure}

\begin{theorem}
\label{thm.SW}
    The involution $\SW_{i,i+1}$ preserves the inversion weight while interchanging $x_i$ and $x_{i+1}$ in the $x$-weight. That is, for every $c \in \calC(w)$,
    $$\inv_w(\SW_{i,i+1}(c)) = \inv_w(c) \quad \text{and} \quad x^{\SW_{i,i+1}(c)} = x^c|_{x_i \leftrightarrow x_{i+1}}.$$
\end{theorem}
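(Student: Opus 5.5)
We prove the two identities separately. Write $c'=\SW_{i,i+1}(c)$, and recall that $c'$ is obtained from $c$ by swapping $i$ and $i+1$ along each component $C$ of $\Gamma^\ast_{i,i+1}(c)$. We first check that $c'$ is a proper coloring. This is local at each vertex $v$. If $v\notin\Gamma^\ast_{i,i+1}(c)$, nothing changes near $v$. Otherwise exactly two edges at $v$ lie in $\Gamma_{i,i+1}(c)$, since $\Gamma_{i,i+1}(c)$ is a disjoint union of circles. A short case check (both on the incoming side, both on the outgoing side, or one on each side) shows that swapping $i\leftrightarrow i+1$ on those two edges preserves the condition that the incoming union of colors equals the outgoing union.

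\emph{The $x$-weight.} Every component of $\Gamma_j(c)$ is an embedded circle whose edges are all oriented upward, so it is monotone in the vertical direction and meets a radial segment $I$ exactly once. Hence $|\Gamma_j(c)|=[\Gamma_j(c)]\in H_1(\BA;\BZ)\simeq\BZ$. With the orientation convention of $\Gamma_{i,i+1}(c)$ (the $i$-only edges reversed), the edges containing both $i$ and $i+1$ cancel, and we get
\[
[\Gamma_{i+1}(c)]-[\Gamma_i(c)]=[\Gamma_{i,i+1}(c)]=\sum_{C}[C]=\sum_{C\subseteq\Gamma^\ast_{i,i+1}(c)}[C].
\]
Swapping along one component $C$ changes $[\Gamma_i]$ by $+[C]$ and $[\Gamma_{i+1}]$ by $-[C]$. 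Swapping along all of $\Gamma^\ast_{i,i+1}(c)$ therefore gives $[\Gamma_i(c')]=[\Gamma_i(c)]+[\Gamma_{i+1}(c)]-[\Gamma_i(c)]=[\Gamma_{i+1}(c)]$, and symmetrically $[\Gamma_{i+1}(c')]=[\Gamma_i(c)]$. The multiplicities of all other colors are unchanged. This proves $x^{c'}=x^c|_{x_i\leftrightarrow x_{i+1}}$.

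\emph{The inversion.} At a merge $m$, the pairs $(a,b)$ in $\inv(c(e_\mathrm R),c(e_\mathrm L))$ split into two kinds.
\begin{itemize}
\item If at most one of $a,b$ lies in $\{i,i+1\}$, then $\sgn(a-b)$ is unchanged by the swap, because $i$ and $i+1$ compare identically with every other color.
\item If $\{a,b\}=\{i,i+1\}$, the pair contributes only when $i$ and $i+1$ enter $m$ along different incoming edges. Those are exactly the merges at which $\Gamma_{i,i+1}(c)$ has a local maximum (a ``cap''): the curve comes up the $(i+1)$-edge and turns back down the reversed $i$-edge. Such a cap contributes $+1$ if the $(i+1)$-edge is on the left and $-1$ otherwise. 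Equivalently, the sign records whether the curve turns clockwise or counterclockwise at the cap.
\end{itemize}
Swapping along $C$ reverses the orientation of $C$ and flips the sign of every cap on $C$. It therefore suffices to prove the following statement.
\begin{quote}
\textbf{Key lemma.} For every component $C$ with nonzero winding number, the number of clockwise caps on $C$ equals the number of counterclockwise caps on $C$.
\end{quote}
Given the lemma, the total contribution of each swapped component is $0$ before and after the swap, so $\inv_w(c')=\inv_w(c)$.

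The key lemma is the main obstacle. The curve $C$ is embedded and piecewise vertically monotone, with maxima at merges and minima at splits. A total turning-number argument alone is not enough: an embedded essential curve has rotation number $0$, but that only balances caps together with cups.

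The plan is to pass to the universal cover strip $[0,1]\times\mathbb R$. There the lift $\tilde C$ is a $y$-periodic embedded curve separating a left region $L$ from a right region $R$. At a cap, the small region directly above the cap lies in $L$ for a counterclockwise turn and in $R$ for a clockwise turn, or the reverse depending on the orientation of $C$. Now fix a generic vertical line $x=x_0$ and follow it upward through one period. It alternates between $L$ and $R$ each time it crosses $\tilde C$, and its crossings within one period come in pairs (the algebraic intersection with $\tilde C$ is $0$, since both are parallel to the core). Sweeping $x_0$ across $[0,1]$, the crossings are created or destroyed in pairs exactly at caps and cups. A cap whose upper side lies in $L$ creates an $R$-interval inside $L$, and a cap whose upper side lies in $R$ creates the reverse.

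At $x_0=0$ and at $x_0=1$, the vertical line is disjoint from $\tilde C$, lying entirely in $L$ (respectively $R$). Each such side line is itself an essential curve sitting entirely on one side of $\tilde C$. Counting the creations and annihilations of $L$-in-$R$ and $R$-in-$L$ intervals along the sweep should force the two kinds of caps to be equinumerous, with the cups balancing among themselves in the same way.

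If this sweep bookkeeping becomes messy, the fallback is induction on the number of caps of $C$. Choose a cap whose adjacent cup can be removed by a local isotopy of $C$ within its complement, eliminating an adjacent cap–cup pair of opposite horizontal direction, and reduce to a monotone essential circle, which has no caps at all.
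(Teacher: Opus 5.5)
\textbf{There is a genuine gap in the proof of your key lemma.} Your reduction matches the paper's. The properness check is fine, and so is the homology computation for the $x$-weight. You also correctly observe that only the $\{i,i+1\}$-pairs at caps of swapped components can change $\inv_w$. So everything reduces to the vanishing of the signed cap count on each component of $\Gamma^\ast_{i,i+1}(c)$.

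Your sweep argument and your fallback induction both use only that $C$ is an embedded, piecewise vertically monotone, essential curve. For such curves the lemma is false. In $\BA=[0,1]\times[0,1]/_\sim$, let $C$ do the following:
\begin{enumerate}
\item go up along $x=0.5$ from height $0$ to $0.2$;
\item drift monotonically to $x=0.4$ by height $0.25$;
\item go up to height $0.7$ and cap over to $x=0.45$ (a clockwise turn);
\item go down to height $0.3$ and cup over to $x=0.5$ (a counterclockwise turn);
\item go up to height $1\equiv 0$.
\end{enumerate}
This curve is embedded, has winding number $1$ and rotation number $0$, and has exactly one cap, which is clockwise. So no bookkeeping of $L$- and $R$-intervals can force the caps to balance. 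Incidentally, crossings with the lines $x=x_0$ are created and destroyed at the $x$-extrema of $C$, not at its caps and cups.

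The induction fails for the same reason. The local isotopy that cancels a cap against an adjacent cup is a zigzag removal. It deletes one cap and one cup of \emph{opposite} rotational sense, so the signed cap count is not preserved. Moreover, the isotoped curve no longer comes from a coloring.

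\textbf{The missing idea is input from the coloring.} The $i$-colored arcs of $C$ lie on components of $\Gamma_i(c)$, which are disjoint upward essential circles. They are therefore linearly ordered from left to right. The paper exploits this as follows.
\begin{enumerate}
\item Along an $i$-arc of $C$ from a cap down to the next cup, you stay on one component of $\Gamma_i(c)$.
\item Along an $(i+1)$-arc from a cup $s$ up to the next cap $m$, you avoid $\Gamma_i(c)$, so you stay in one strip between consecutive components.
\item Compare the side of the $\Gamma_i$-loop on which the arc leaves $s$ with the side on which it arrives at $m$. This shows that the index of the adjacent $\Gamma_i$-component shifts by $(\epsilon(m)-\epsilon(s))/2$.
\end{enumerate}
Closing up $C$ gives $\sum_m\epsilon(m)=\sum_s\epsilon(s)$: caps balance against cups, with sign.

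Combine this with the turning-number identity you already noted, $\sum_m\epsilon(m)+\sum_s\epsilon(s)=2\,(o(C)-\omega(C))=0$ for essential $C$. The result is exactly your key lemma.

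In the example above, $\epsilon(s)=-1\neq+1=\epsilon(m)$, so it cannot be a component of $\Gamma_{i,i+1}(c)$. Concretely, the $(i+1)$-loop through the cup would have to climb more than one full period before closing up, which is impossible for a monotone loop of winding number one.
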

\begin{proof}
   Let $g$ be a component of $\Gamma_{i,i+1}(c)$. At each merge $m$ on $g$, the colors $i$ and $i+1$ occur on the two incoming edges. We set $\epsilon(m)=1$ if $i$ occurs on the right incoming edge, and $\epsilon(m)=-1$ otherwise. Similarly, at each split $s$ on $g$, the colors $i$ and $i+1$ occur on the two outgoing edges, and we define $\epsilon(s)=1$ if $i$ occurs on the right outgoing edge, and $\epsilon(s)=-1$ otherwise. 
   
    Recall that $\Gamma_i(c)$ is a disjoint union of oriented circles. We label its components by natural numbers from left to right, so that the first component of $\Gamma_i(c)$ is the leftmost one.
    Fix a split on $g$ as a starting point and follow $g$ along its orientation. Then we encounter splits and merges alternately. Suppose that, along $g$, we pass from a split $s$ to the next merge $m$. If $\epsilon(s)=\epsilon(m)$, then we remain on the same component of $\Gamma_i(c)$. If $\epsilon(s)=1$ and $\epsilon(m)=-1$, then we move to the component immediately to the left, while if $\epsilon(s)=-1$ and $\epsilon(m)=1$, then we move to the component immediately to the right. Thus the quantity $\epsilon(s)-\epsilon(m)$ records the change in the label of the component of $\Gamma_i(c)$ on which we lie. Since we return to the starting component after traversing $g$, the total change is zero. It follows that
    \begin{equation}
        \label{eqn.ms}
        \sum_{m\in g}\epsilon(m) = \sum_{s\in g}\epsilon(s)
    \end{equation}
    where the sums are taken over all merges and splits on $g$, respectively.

    Isotope $g$ in the annulus $\BA=[0,1]\times[0,1]/_\sim$ so that each split and merge becomes a cusp whose tangent vector is vertical.
    Then, at each split and merge, the exterior angle of $g$ is equal to $\epsilon(s)\pi$ and $\epsilon(m)\pi$, respectively. The turning tangent theorem implies that the sum of the exterior angles is determined by the rotation number. See, e.g., \cite[Section 4.5]{Carmo}. Precisely, we have
    \begin{equation}
        \label{eqn.turn}
        \left(\sum_{m\in g}\epsilon(m)+\sum_{s\in g}\epsilon(s)\right) \pi = 2\pi \left(o(g)-\omega(g)\right)
    \end{equation}
    where $\omega(g)$ denotes the winding number of $g$, and $o(g)$ denotes the sign of the orientation of $g$. Combining Equations~\eqref{eqn.ms} and~\eqref{eqn.turn}, we obtain
    \begin{equation}
        \label{eqn.s}
        \sum_{m\in g}\epsilon(m) = o(g)-\omega(g)  \,.
    \end{equation}
    Since $g$ is an oriented simple circle in the annulus, we deduce that
    \begin{equation*}
        \sum_{m\in g}\epsilon(m) = 
        \begin{cases}
            0 & \textrm{if } g \in \Gamma_{i,i+1}^\ast(c),\\
            o(g) & \textrm{if } g \notin \Gamma_{i,i+1}^\ast(c).
        \end{cases}    
    \end{equation*}
    When we apply $\SW_{i,i+1}$, the inversion of colors changes by the sum of $2\epsilon(m)$ over all merges on $\Gamma^\ast_{i,i+1}(c)$. Since $\Gamma^\ast_{i,i+1}(c)$ consists only of components with nonzero winding number, it follows that $\inv_w(\SW_{i,i+1}(c)) = \inv_w(c)$.
    
    On the other hand, it is clear that swapping the colors $i$ and $i+1$ along all the edges in $\Gamma_{i,i+1}(c)$ interchanges the roles of the variables $x_i$ and $x_{i+1}$. The map $\SW_{i,i+1}$ performs this swap only along the edges in $\Gamma^\ast_{i,i+1}(c)$. Every component of each $\Gamma_i(c)$ is a positively oriented essential circle, so $|\Gamma_i(c)|$ is equal to the total winding number of $\Gamma_i(c)$. The complement $\Gamma_{i,i+1}(c)\setminus \Gamma^\ast_{i,i+1}(c)$ consists of zero-winding components, and hence leaving those components unchanged does not affect the total winding numbers, equivalently the component counts, of $\Gamma_i(c)$ and $\Gamma_{i+1}(c)$. Therefore, $\SW_{i,i+1}$ interchanges the roles of $x_i$ and $x_{i+1}$ in the $x$-weight. This completes the proof.
\end{proof}

\begin{corollary}
\label{cor.sym}
    The chromatic symmetric function $X_w$ is symmetric.
\end{corollary}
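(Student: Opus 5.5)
The corollary follows almost formally from Theorem~\ref{thm.SW}, so the plan is to make the standard bijective argument precise. A formal power series of bounded degree in $x_1,x_2,\dots$ is symmetric exactly when it is invariant under every finitary permutation of the variables. Since the adjacent transpositions $(i\ i+1)$, $i\geq 1$, generate the group of finitary permutations of $\mathbb N$, it suffices to show that
\[
X_w[\mathbf x;q]\big|_{x_i\leftrightarrow x_{i+1}} = X_w[\mathbf x;q]
\]
for each fixed $i\geq 1$.

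For fixed $i$, I would rewrite the right-hand side of \eqref{eqn.chromsym} by reindexing the sum along the bijection $\SW_{i,i+1}\colon\calC(w)\to\calC(w)$. This map is a bijection because it is an involution, as noted right after its construction. Reindexing gives
\[
X_w=\sum_{c\in\calC(w)} q^{\inv_w(\SW_{i,i+1}(c))}\,x^{\SW_{i,i+1}(c)}.
\]
By Theorem~\ref{thm.SW}, the summand equals $q^{\inv_w(c)}\,x^c|_{x_i\leftrightarrow x_{i+1}}$. Summing over $c$ therefore yields $X_w|_{x_i\leftrightarrow x_{i+1}}$, which is the required invariance.

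The only point needing care is that these manipulations are legitimate for an infinite sum. Reindexing an infinite sum along a bijection is valid here because each coefficient is a finite sum. Fix a monomial $x^\beta$ of degree $\deg(w)$. Only finitely many colors appear in $\beta$, and any coloring $c$ with $x^c=x^\beta$ uses only those colors. Each edge carries a subset of that finite color set of fixed size $t(e)$, and $w$ has finitely many edges. Hence only finitely many colorings contribute to the coefficient of $x^\beta$, and this coefficient lies in $\mathbb Z[q^{\pm1}]$.

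For the same reason, the substitution $x_i\leftrightarrow x_{i+1}$ acts coefficientwise by permuting monomials, so it commutes with the sum. I do not expect any genuine obstacle: all the combinatorial content is contained in Theorem~\ref{thm.SW}. Finally, homogeneity of degree $\deg(w)$, already noted after Definition~\ref{def.chromsym}, places $X_w$ in the degree-$\deg(w)$ part of $\Lambda_q$.
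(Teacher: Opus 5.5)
Your proof is correct and is essentially the paper's argument: both derive invariance under $x_i\leftrightarrow x_{i+1}$ from Theorem~\ref{thm.SW} using the involution $\SW_{i,i+1}$. You reindex the whole sum along this bijection, whereas the paper splits $\calC(w)$ into fixed points and swapped pairs; your added check that each coefficient involves only finitely many colorings is a harmless detail the paper leaves implicit.
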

\begin{proof}
    The involution $\SW_{i,i+1}$ allows us to partition $\calC(w)$ into three parts: $\calC_0$, consisting of colorings fixed by $\SW_{i,i+1}$, and two subsets $\calC_+$ and $\calC_-$, which are in bijection under $\SW_{i,i+1}$.
   Theorem~\ref{thm.SW} implies that, for each $c\in\mathcal{C}_0$, the monomial $q^{\inv_w(c)}x^c$ is symmetric with respect to $x_i$ and $x_{i+1}$. In addition,
    \begin{align*}
        \sum_{c\in\mathcal{C}_+} q^{\inv_w(c)}x^c + \sum_{c\in\mathcal{C}_-} q^{\inv_w(c)}x^c &=
        \sum_{c\in\mathcal{C}_+}
        (q^{\inv_w(c)}x^c + q^{\inv_w(\SW_{i,i+1}(c))}x^{\SW_{i,i+1}(c)}) \\
        &= \sum_{c\in\mathcal{C}_+}q^{\inv_w(c)} (x^c+x^c|_{x_i\leftrightarrow x_{i+1}}),
    \end{align*}
    which is also symmetric with respect to $x_i$ and $x_{i+1}$. Therefore, $X_w$ is invariant under the transposition of $x_i$ and $x_{i+1}$ for every $i$, and hence $X_w$ is symmetric.
\end{proof}

\begin{remark}
    A similar formulation of the involution $\SW_{i,i+1}$ can be found in \cite{KO26}, where the authors use the notion of a regular region, corresponding to $\Gamma^*_{i,i+1}$ in this paper. Their proof is inductive, whereas ours is direct.
\end{remark}

\subsection{Agreement with Turaev's isomorphism} 
\label{sec.agree}

Recall from Section~\ref{sec.webs} that any annular link can be expressed as a linear combination of annular webs by resolving each crossing as in Figure~\ref{fig.skweb}. Therefore, our definition of the chromatic symmetric function for annular webs extends to annular links. Moreover, since the crossing resolution in Figure~\ref{fig.skweb} satisfies the skein relation in Figure~\ref{fig.skein}, this extension defines a map
$$\Sk^+(\BA) \longrightarrow \Lambda_q, \qquad L \longmapsto X_L \,.$$
\begin{theorem} \label{thm.Turaev}
	The above map agrees with Turaev's isomorphism~\eqref{eqn.Turaev}.
	Therefore, for any annular web $w$, the two symmetric functions
	$X_w$ and $\calX_w$ coincide.
\end{theorem}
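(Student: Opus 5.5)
Both sides are $\mathbb{Z}[q^{\pm1}]$-linear maps out of $\Sk^+(\BA)$. I will compare them on a convenient generating set of $\Sk^+(\BA)$ and then use multiplicativity.

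\textbf{Multiplicativity.} Both maps will be shown to be algebra homomorphisms for the stacking product. For Turaev's map this is part of the isomorphism~\eqref{eqn.Turaev}. For $X$, realize the product of two webs $w_1w_2$ by placing $w_1$ in an inner annulus and $w_2$ in an outer annulus, which is allowed since $\Sk^+(\BA)$ is commutative. A coloring of the disjoint union is then a pair of colorings. The inversion statistic is a sum over merges, and the merges of $w_1w_2$ are those of $w_1$ together with those of $w_2$. The $x$-weight is multiplicative because each $\Gamma_i(c)$ splits into its two pieces. Hence $X_{w_1w_2}=X_{w_1}X_{w_2}$ on webs. Extending by linearity through the crossing resolution gives multiplicativity on links, once we know $X_L$ is well defined. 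That well-definedness is asserted just before the theorem: the resolution respects the skein relation. Beyond that, I would also check invariance under the web relations that encode isotopy, namely Reidemeister II/III after resolution. The cleanest route is to show $X$ satisfies the standard $\mathfrak{gl}$ web relations (associativity of merges, digon removal with $[k+l]!/[k]![l]!$-type coefficients, square switches), because these imply the MOY/Murakami--Ohtsuki--Yamada relations and hence link isotopy.

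\textbf{Generators and base case.} It is known (Turaev) that $\Sk^+(\BA)$ is generated as an algebra by thickened essential circles, i.e. the webs consisting of a single essential circle of thickness $k$, which map to $e_k$. Combining multiplicativity with linearity, it then suffices to prove two things:
\begin{itemize}
\item[(i)] $X$ of an essential circle of thickness $k$ equals $e_k$;
\item[(ii)] $X$ is compatible with the local relations used to reduce any web to a combination of such circles.
\end{itemize}
For (i), a circle with no vertices has colorings indexed by $k$-subsets $S\subset\BN$, with $\inv=0$ and weight $\prod_{i\in S}x_i$. The sum is therefore exactly $e_k$.

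\textbf{Main obstacle.} The main obstacle is (ii), verifying the local web relations for the coloring model with the signed statistic $\inv(c(e_R),c(e_L))$. For associativity, I will check that the inversion counts agree on both sides for any three disjoint color sets, by direct bookkeeping. The digon relation is the key identity: a split into thicknesses $k,l$ followed by a merge contributes
\[
\sum_{A\sqcup B=S,\ |A|=k}q^{\inv(B,A)},
\]
and this sum is the symmetric $q$-binomial $\qbinom{k+l}{k}$ in balanced form. That matches the scalar for digon removal. The square switch relations are local as well, and reduce to identities among sums over set partitions of a fixed color set. I would verify these by a bijection on splittings of subsets, or by reducing to the $2$-color case in the spirit of the proof of Theorem~\ref{thm.SW}.

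With these relations established, the map $L\mapsto X_L$ factors through the same relations that define $\calX$. It also agrees with $\calX$ on the generators $e_k$, since thick circles map to $e_k$ under Turaev's isomorphism as recalled in the introduction. The two algebra maps therefore coincide, and in particular $X_w=\calX_w$ for every annular web $w$.
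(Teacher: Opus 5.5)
Your route is genuinely different from the paper's: you make $L\mapsto X_L$ an algebra map on $\Sk^+(\BA)$, check the web relations, and compare on thick circles. As written, however, the steps that carry the proof are asserted rather than proved. There are three of them.

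\emph{Isotopy invariance.} Comparing two algebra maps on generators presupposes that $X_L$ is invariant under braid-like Reidemeister~II and~III. Your multiplicativity on links, stacked versus nested, needs this too. You only say that the web relations ``imply link isotopy''. That is true here, but it is the real work. In these conventions a crossing resolves as $q^{\pm1}\,\mathrm{id}-b_i$, where $b_i$ merges strands $i,i+1$ into a thickness-$2$ edge and splits it again. Reidemeister~II then needs $b_i^2=[2]b_i$, and Reidemeister~III is equivalent to $b_1b_2b_1-b_1=b_2b_1b_2-b_2$, which still has to be derived from associativity and the square switch.

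\emph{The generator check is made on the wrong object.} In $\Sk^+(\BA)\otimes\mathbb{Q}(q)$ the thick circle is the closure $\widehat{P_k}$ of the anti-$q$-symmetrizer $P_k$. The value of $X$ on it is computed by resolving the crossings of the braids $b_\sigma$ occurring in $P_k$, not by counting colorings of a vertex-free circle. To get $X(\widehat{P_k})=e_k$ you must show that the resolved $P_k$ agrees, modulo relations $X$ satisfies, with $\frac{1}{[k]!}$ times the web that merges the $k$ strands into a thickness-$k$ edge and splits it again. The digon relation then closes this up to the thick circle. Your step (i) skips this.

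\emph{The reduction in (ii) is not available.} Step (ii) appeals to ``the local relations used to reduce any web to a combination of such circles''. Turaev's isomorphism only tells you that thick circles generate $\Sk^+(\BA)\otimes\mathbb{Q}(q)$ as an algebra. That every annular web reduces to concentric thick circles by digon, associativity and square-switch moves is a separate nontrivial fact, obtainable e.g.\ via skew Howe duality and the trace of the $q$-Schur algebra. With that fact you could skip isotopy entirely: compare $X_w$ and $\calX_w$ on webs, then pass to links through the crossing resolution. But you neither prove nor cite it.

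The paper avoids all three issues by working with braid closures directly. It rescales the basis of the permutation module $M^\mu$ to $\widetilde v_{\mathbf a}=(-q)^{-\inv(\mathbf a)}v_{\mathbf a}$. In that basis the color-propagation rule at a resolved crossing is exactly the action of the Hecke generator $T_i$. Closing the braid then gives $[x^\mu]X_{\widehat b}=\mathrm{Tr}_{M^\mu}(b)$, with invariance under braid relations and conjugation automatic. Symmetry of $X_w$ (from the SW involution) upgrades this to $[m_\mu]X_{\widehat b}$. Young's rule $\mathrm{Tr}_{M^\mu}(b)=\sum_\lambda K_{\lambda\mu}\mathrm{Tr}_{V_\lambda}(b)$ then matches it with the monomial expansion of $\sum_\lambda\mathrm{Tr}_{V_\lambda}(b)s_\lambda$.

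Completed, your approach would give a purely skein-theoretic proof driven by the web relations, which the paper verifies only after the theorem. It would also produce symmetry as an output rather than use it as an input.
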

\begin{proof}
    Every annular link can be represented as the closure of a braid, so it suffices to consider an $n$-strand braid $b$.

    Let $\mathbf{1}_n$ denote the one-dimensional representation of the Hecke algebra $H_n$ on which every generator $T_i$ acts by $q$. For a partition $\mu=(\mu_1,\ldots,\mu_r)\vdash n$, let
    \[
        H_\mu = H_{\mu_1}\otimes\cdots\otimes H_{\mu_r} \subset H_n,
        \qquad
        \mathbf{1}_\mu = \mathbf{1}_{\mu_1}\otimes\cdots\otimes\mathbf{1}_{\mu_r}.
    \]
    We consider the permutation module $M^\mu:=H_n\otimes_{H_\mu}\mathbf{1}_\mu$ with a basis $\{v_{\mathbf a}\}$ indexed by words $\mathbf a=(a_1,\ldots,a_n)$ of content $\mu$. In this basis,
    \[
        T_i \cdot v_{(\ldots,k,l,\ldots)} =
        \begin{cases}
            v_{(\ldots,l,k,\ldots)}, & k<l,\\
            q\,v_{(\ldots,k,k,\ldots)}, & k=l,\\
            v_{(\ldots,l,k,\ldots)} +(q-q^{-1})v_{(\ldots,k,l,\ldots)}, & k>l.
        \end{cases}
    \]
    For a word $\mathbf a=(a_1,\ldots,a_n)$, set
    \[
        \inv(\mathbf a) = \#\{(i,j):i<j,\ a_i>a_j\}, \qquad
        \widetilde v_{\mathbf a} = (-q)^{-\operatorname{inv}(\mathbf a)}v_{\mathbf a}.
    \]
    Since interchanging adjacent entries $k<l$ increases the inversion number by one, while interchanging $k>l$ decreases it by one, the action in the basis $\{\widetilde v_{\mathbf a}\}$ is
    \[
        T_i \cdot \widetilde v_{(\ldots,k,l,\ldots)}
        =
        \begin{cases}
            -q\,\widetilde v_{(\ldots,l,k,\ldots)}, & k<l,\\
            q\,\widetilde v_{(\ldots,k,k,\ldots)}, & k=l,\\
            -q^{-1}\widetilde v_{(\ldots,l,k,\ldots)} +(q-q^{-1})\widetilde v_{(\ldots,k,l,\ldots)}, & k>l.
        \end{cases}
    \]
    This is exactly the rule governing the propagation of colors through a crossing under the resolution in Figure~\ref{fig.skweb}. Thus the coloring rule agrees with the action of $T_i$ on the basis $\{\widetilde v_{\mathbf a}\}$.

    When the braid $b$ is closed, a coloring contributes precisely when its outgoing word agrees with its incoming word. Therefore, summing over all words of content $\mu$ gives
    \[
        [x^\mu]X_{\widehat b} = \operatorname{Tr}_{M^\mu}(b), \qquad x^\mu=x_1^{\mu_1}\cdots x_r^{\mu_r}.
    \]
    Since $X_{\widehat b}$ is symmetric by Corollary~\ref{cor.sym}, this implies
    \[
        [m_\mu]X_{\widehat b} = \operatorname{Tr}_{M^\mu}(b).
    \]
    
    Using the monomial expansion of $s_\lambda$, we can rewrite
\eqref{eqn.turaev} as
    \[
        \begin{aligned}
            \mathcal X_{\widehat b} = \sum_{\lambda\vdash n} \operatorname{Tr}_{V_\lambda}(b)s_\lambda
            = \sum_{\lambda\vdash n} \operatorname{Tr}_{V_\lambda}(b) \sum_{\mu\vdash n}K_{\lambda\mu}m_\mu
            = \sum_{\mu\vdash n} \left( \sum_{\lambda\vdash n} K_{\lambda\mu}\operatorname{Tr}_{V_\lambda}(b) \right)m_\mu.
        \end{aligned}
    \]
    Here $K_{\lambda\mu}$ denotes the Kostka number.
    On the other hand, after extending scalars to $\mathbb Q(q)$, Young's rule gives
    \[
        M^\mu \simeq \bigoplus_{\lambda\vdash n} K_{\lambda\mu}V_\lambda, \quad \textrm{and hence} \quad
        \mathrm{Tr}_{M^\mu}(b) = \sum_{\lambda\vdash n} K_{\lambda\mu}\mathrm{Tr}_{V_\lambda}(b).
    \]
    Combining the above identities, we obtain
    \[
        X_{\widehat b} = \sum_{\mu\vdash n} \operatorname{Tr}_{M^\mu}(b)m_\mu = \mathcal X_{\widehat b}.
    \]
    Thus the map $L\mapsto X_L$ agrees with Turaev's isomorphism.
\end{proof}

By Theorem~\ref{thm.Turaev}, we identify $\calX_w$ with $X_w$ for any annular web $w$. It is known that this symmetric function satisfies the web relations and recovers the $\mathfrak{gl}_N$-polynomial \cite{MOY98}. We prove these properties directly from our combinatorial definition.

\begin{proposition}
    The chromatic symmetric function $X_w$ for positive annular webs  satisfies the web relations illustrated in Figure~\ref{fig.webrel}, as well as their mirror images.
    \begin{figure}[!htbp]
        \centering
        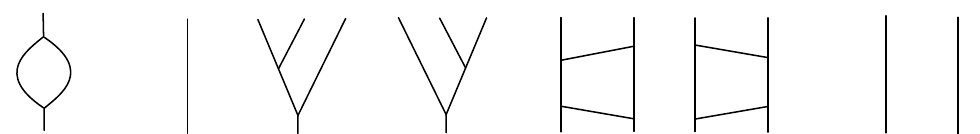
        \caption{Web relations.}
        \label{fig.webrel}
    \end{figure}
\end{proposition}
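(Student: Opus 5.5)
The figure is not visible, so I assume the web relations are the standard MOY/Cautis--Kamnitzer--Morrison relations for upward-oriented $\mathfrak{gl}$ webs. These are: associativity (coassociativity) of merges (splits); the digon removal relation, where a split of an $(a+b)$-edge into $a$ and $b$ followed by a merge equals $\qbinom{a+b}{a}$ times the $(a+b)$-edge; and the square switch relations relating $E$--$F$ type ladder rungs. All of these are local. The plan is to prove each one by a local bijection, or local generating-function identity, on colorings. Both sides of a relation agree outside a disk $D$, so the plan is: fix the colors on the boundary edges of $D$, sum over all colorings of the interior, and compare the resulting local weighted counts.

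Two facts make this sufficient. The $x$-weight depends only on the loops $\Gamma_i(c)$. Inside $D$ a color entering at a boundary point exits at a boundary point, and the loop structure is determined by the boundary pairing of colors. So I will check that the interior colorings on each side induce the same boundary behaviour. For webs where each color enters at the bottom and leaves at the top, as in all upward relations, this is automatic. The $q$-weight splits as a local sum of $\inv_m$ over merges inside $D$ plus a sum over merges outside $D$, and the outside part is common to both sides. So each relation reduces to an identity of polynomials in $q$ indexed by the boundary color sets.

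\textbf{Associativity.} Suppose three incoming edges with color sets $A,B,C$ merge. One side has contributions $\inv(B,A)+\inv(C,A\cup B)$ and the other has $\inv(C,B)+\inv(B\cup C,A)$. These agree by bilinearity of $\inv$. Splits carry no weight, so coassociativity is a trivial bijection of colorings.

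\textbf{Digon.} With outgoing color set $S$, $|S|=a+b$, the interior colorings correspond to subsets $A\subset S$ with $|A|=a$. The merge on the digon contributes $\inv(S\setminus A,A)$ or its mirror, depending on which side $A$ lies. The claim becomes
\[
\sum_{A\subset S,\ |A|=a} q^{\inv(S\setminus A, A)}=\qbinom{a+b}{a}.
\]
This is the symmetric $q$-binomial: counting pairs with signs gives $2\,\mathrm{inv}-ab$ in terms of the usual inversion count of a 0/1 word, and $q^{-ab}\sum q^{2\,\mathrm{inv}}$ is exactly the balanced $q$-binomial. Mirror images use the same computation with $A$ and its complement swapped.

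\textbf{Square switch (main obstacle).} These relations are of the form
\[
\text{square}_1 = \text{square}_2 + [\,\cdot\,]\,\text{(other web)},
\]
and for them the interior colorings with fixed boundary data need not be in bijection. My approach is to fix the boundary sets and parametrize the interior colorings on each side by which colors travel across the rungs. Each side then becomes a sum over subsets of some small sets of colors. The required identity should reduce to a $q$-Vandermonde type identity for the balanced $q$-binomials. I would first treat the case with rungs of thickness $1$ and derive the general case via associativity and digon removal, which is the standard reduction: higher rungs are built from thin ones by the explode/implode relations. The care needed is in checking that the boundary pairing of colors, and hence the loop count, is the same on both sides. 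I expect the inversion bookkeeping in this rung-exchange identity to be the main computational step.
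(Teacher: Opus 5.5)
\textbf{There is a genuine gap: the square switch is not proved.} Your treatment of associativity and digon removal is correct and matches the paper. The paper handles those two relations with the bilinearity identity for $\inv$ and the balanced $q$-binomial $\sum_{A\subset B,\,|A|=k}q^{\inv(A,B\setminus A)}=\qbinom{|B|}{k}$.

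The square switch is the only nontrivial relation, and for it you give a plan but no argument. The mechanism you predict, a $q$-Vandermonde identity, is also not what makes it work. The paper's third relation is the thin-rung square switch, so your reduction from thick rungs is not needed. Take boundary sets $A$ and $B$ on the two strands, the same at top and bottom. The interior colorings of the two squares are then indexed by a single color $b\in B\setminus A$, respectively $a\in A\setminus B$, that crosses over and back. In the paper's conventions the relation becomes
\[
\sum_{b\in B\setminus A} q^{\inv(A,b)+\inv(b,B)}-\sum_{a\in A\setminus B} q^{\inv(a,B)+\inv(A,a)}=[\,|B|-|A|\,].
\]
Each sum on the left depends on how $A\setminus B$ and $B\setminus A$ interleave in $\mathbb N$, not only on their cardinalities. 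So no identity among $q$-binomials in the sizes can produce it; only the difference is independent of the interleaving.

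\textbf{The missing idea is a telescoping argument.}
\begin{enumerate}
\item Elements of $A\cap B$ cancel in every exponent. So list $(A\setminus B)\cup(B\setminus A)$ as $c_1<\cdots<c_m$.
\item Let $h_k$ be the number of $c_i\in B$ minus the number of $c_i\in A$ with $i\le k$.
\item A direct count gives exponent $|A|-|B|+1+2h_{k-1}$ when $c_k\in B$, and $|A|-|B|-1+2h_{k-1}$ when $c_k\in A$.
\item In both cases the signed term equals $\bigl(q^{|A|-|B|+2h_k}-q^{|A|-|B|+2h_{k-1}}\bigr)/(q-q^{-1})$.
\item The sum therefore collapses to $[\,|B|-|A|\,]$, since $h_0=0$ and $h_m=|B|-|A|$.
\end{enumerate}

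You also need the remaining boundary data, where the top differs from the bottom by exchanging one $a\in A\setminus B$ with one $b\in B\setminus A$. There each square has exactly one interior coloring and the identity web has none. The two weights coincide by bilinearity of $\inv$, each containing the cross term between $a$ and $b$ exactly once.
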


\begin{proof}
    Two elementary properties of $\inv$, namely $ \qbinom{|B|}{k}  = \sum_{A\subset B, |A|=k} q^{\operatorname{inv}(A,B\setminus A)}$ and
    \[
    \operatorname{inv}(A,B)+\operatorname{inv}(A\sqcup B,C)     =
    \operatorname{inv}(A,B\sqcup C)+\operatorname{inv}(B,C) \quad \textrm{for disjoint $A,B,C$},
    \]
    imply that $X_w$ satisfies the first and second relations and their mirrors.
    
    For the third relation, a comparison of all possible colorings shows that it suffices to check that
    \[
    \sum_{b\in B\setminus A} q^{\inv(A,b)+\inv(b,B)} - \sum_{a\in A\setminus B} q^{\inv(a,B)+\inv(A,a)} = [\,|B|-|A|\,].
    \]
    The contributions coming from elements of $A \cap B$ cancel in both exponents.  Thus it suffices to consider the elements of $A \Delta B =(A\setminus B)\cup(B\setminus A)$.
    Order these elements increasingly, $c_1<c_2<\cdots<c_m$,
    and define for each $k$
    \[
    h_{k-1} = |\{\,c_i\in B : i<k\,\}| - |\{\,c_i\in A : i<k\,\}|.
    \]  
    If $c_k \in B \setminus A$, then $h_k=h_{k-1}+1$ and $\inv(A,c_k)+\inv(c_k,B)=|A|-|B|+1+2h_{k-1}$. Thus
    \[
    q^{\operatorname{inv}(A,c_k)+\operatorname{inv}(c_k,B)}
    =  q^{|A|-|B|+1+2h_{k-1}}
    = \frac{q^{|A|-|B|+2h_k}-q^{|A|-|B|+2h_{k-1}}}{q-q^{-1}}.
    \]
    If $c_k \in A \setminus B$, then $h_k=h_{k-1}-1$ and $ \inv(c_k,B)+\inv(A,c_k) = |A|-|B|-1+2h_{k-1}.$ Thus
    \[
    -q^{\operatorname{inv}(c_k,B)+\operatorname{inv}(A,c_k)}
    = -q^{|A|-|B|-1+2h_{k-1}}
    = \frac{q^{|A|-|B|+2h_k}-q^{|A|-|B|+2h_{k-1}}}{q-q^{-1}}.
    \]    
    Consequently, both cases have the same telescoping form.
    Summing over all elements $c_k$ for $1 \leq k \leq m$, we obtain a telescoping sum:
    \begin{equation*}
        \sum_{b\in B} q^{\operatorname{inv}(A,b)+\operatorname{inv}(b,B)} -
        \sum_{a\in A} q^{\operatorname{inv}(a,B)+\operatorname{inv}(A,a)}
        = \frac{q^{|A|-|B|+2h_m}-q^{|A|-|B|+2h_0}}{q-q^{-1}}= [\,|B|-|A|\,].     
    \end{equation*}
    Note that $h_0=0$ and $h_m = |B|-|A|$. This completes the proof.
\end{proof}

\begin{proposition}
    The specialization of $X_w$ at $x_i = q^{2i-N-1}$ for $1 \leq i \leq N$ agrees with the $\mathfrak{gl}_N$-polynomial $\langle w \rangle_{\mathfrak{gl}_N}$ of an annular web $w$. That is,
    \begin{equation*}
        X_w(q^{-N+1},\ldots,q^{N-1}) = \langle w \rangle_{\mathfrak{gl}_N} \in \BZ[q^{\pm1}] \, .
    \end{equation*}
\end{proposition}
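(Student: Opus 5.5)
The plan is to evaluate $X_w$ at $x_i=q^{2i-N-1}$ directly from the coloring sum of Definition~\ref{def.chromsym} and compare it with the state-sum formula for the $\mathfrak{gl}_N$ web evaluation, in the MOY form \cite{MOY98}. Under the specialization, only colorings using colors in $\{1,\dots,N\}$ survive, since $x_i=0$ for $i>N$. Each coloring then contributes
\[
q^{\inv_w(c)}\prod_{i=1}^N q^{(2i-N-1)|\Gamma_i(c)|}.
\]
I would interpret $|\Gamma_i(c)|$ as the winding number of $\Gamma_i(c)$ around the annulus, as in the proof of Theorem~\ref{thm.SW}. The second factor is then $q^{\sum_i (2i-N-1)\,\mathrm{wind}(\Gamma_i(c))}$, which is exactly the ``rotation'' weight attached in the MOY state model to a state labelled by subsets of $\{1,\dots,N\}$. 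The MOY weight measures the rotation of each colored curve: a curve colored $i$ with rotation number $r$ contributes $q^{(2i-N-1)r}$, up to the normalization convention.

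The main obstacle is the difference between the annulus and the plane. $\langle w\rangle_{\mathfrak{gl}_N}$ is an invariant of $w$ viewed in the plane (or in $\BA\subset\mathbb{R}^2$). The MOY state sum uses rotation numbers of the planar curves and local vertex weights that depend on the relative position of colors at \emph{both} merges and splits, typically a half-weight $q^{\pm\frac12\inv}$ at each. Our formula instead uses winding numbers around the core and weights only at merges. I would bridge this in two steps.

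First, I would redistribute the vertex weights. The merge-only weight should equal a symmetric merge-plus-split weight up to a global correction. To show this, I would sum, over each two-colored component $g$ of $\Gamma_{i,j}(c)$, the identity \eqref{eqn.ms} generalized to arbitrary colors $i<j$. That identity says $\sum_{m\in g}\epsilon(m)=\sum_{s\in g}\epsilon(s)$, and its proof (tracking the component label) works verbatim for any pair of colors.

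Second, I would convert rotation numbers of planar curves into winding numbers. Equation~\eqref{eqn.s} relates $\sum_{m\in g}\epsilon(m)$ to $o(g)-\omega(g)$. Combined with the turning tangent theorem applied to each $\Gamma_i(c)$, this expresses the total planar rotation weight of a coloring as the winding-number weight plus a sum of vertex terms that cancels against the vertex-weight discrepancy.

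If the exact matching of conventions becomes messy, the fallback is the following. By Theorem~\ref{thm.Turaev} and the preceding proposition, $X_w$ satisfies the web relations of Figure~\ref{fig.webrel}. It also satisfies $X_{e_k\text{-circle}}=e_k$, which specializes to $\qbinom{N}{k}$, the $\mathfrak{gl}_N$ value of an essential $k$-circle. Both sides are multiplicative under nesting, by commutativity of $\Sk^+(\BA)$ and the product structure of the coloring sum. Using the web relations, including the square/digon relations, one reduces any positive annular web to a combination of products of thickened essential circles, as in Queffelec--Rose. Uniqueness of evaluation under these relations then forces equality.

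The reduction step is the one I distrust, since it is essentially the structure theorem for annular webs. I would therefore try the direct state-sum comparison first, checking the sign and normalization conventions on a single essential circle of thickness $k$ and on a digon.
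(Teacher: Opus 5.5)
Your direct state-sum comparison is correct, and it takes a genuinely different route from the paper's.

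\textbf{How the paper argues.} The paper applies \eqref{eqn.s}, the turning-tangent identity for the bicolored curves, to write $\inv_w(c)=\sum_{i<j}\bigl(o(\Gamma_{i,j}(c))-\omega(\Gamma_{i,j}(c))\bigr)$. It then uses $\omega(\Gamma_{i,j}(c))=|\Gamma_j(c)|-|\Gamma_i(c)|$ to see that the specialization weight $\sum_i(2i-N-1)|\Gamma_i(c)|$ equals $\sum_{i<j}\omega(\Gamma_{i,j}(c))$. The exponent therefore collapses to $\sum_{i<j}o(\Gamma_{i,j}(c))$, which is Robert's vertex-free state sum and the paper's definition of $\langle w\rangle_{\mathfrak{gl}_N}$.

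\textbf{How your route differs.} You target MOY's form instead: half-weights at every merge and split, plus the rotation of the monochromatic curves $\Gamma_i(c)$. For that, the only input is the combinatorial balance \eqref{eqn.ms}. Its label-tracking proof never uses adjacency of the two colors, so it holds for every pair $i<j$ and every component of $\Gamma_{i,j}(c)$. Summing gives $\sum_s\inv_s(c)=\sum_m\inv_m(c)$, so MOY's symmetric vertex exponent is exactly $\inv_w(c)$.

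\textbf{What each buys.} Your route needs no cusp or turning-angle bookkeeping, provided MOY's sum is accepted as the definition. If Robert's formula is the definition, as in the paper, then the MOY--Robert equivalence is itself that turning computation. In that sense the paper's route is more self-contained, and it also yields the formula for $\inv_w(c)$ as a byproduct.

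\textbf{Your second step is unnecessary.} The ``main obstacle'' you describe does not arise, and the second step should be dropped. In MOY's model, rotation enters only through the $\Gamma_i(c)$. Each component of $\Gamma_i(c)$ is an upward-monotone simple closed curve, smooth through the vertices. In the plane, with upward sent to counterclockwise, it therefore has rotation number $1$. Hence $\mathrm{rot}(\Gamma_i(c))=|\Gamma_i(c)|$ with no vertex correction, exactly as your first paragraph says. Identity \eqref{eqn.s} concerns the bicolored curves and belongs to the Robert-style argument. After your first step there is no vertex discrepancy left, so any ``vertex terms'' from the second step would double count.

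\textbf{The sign-convention check is also unnecessary.} Relabel colors $a\mapsto M+1-a$ in colorings with colors in $\{1,\dots,M\}$. This negates $\inv_w$ and reverses the variables, so by Corollary~\ref{cor.sym} it gives $X_w[\mathbf x;q^{-1}]=X_w[\mathbf x;q]$. The set of values $q^{2i-N-1}$ is also closed under inversion. So flipping the sign of the vertex exponent, the rotation exponent, or both leaves the specialized sum unchanged. A circle or digon could not have detected such a flip anyway.

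\textbf{The fallback.} Its reduction to nested essential circles is a genuinely unproved structure statement, but you do not need it.
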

\begin{proof}
    Using Equation~\eqref{eqn.s}, one can rewrite the inversion of $c \in \calC(w)$ as
    \begin{equation*}
        \inv_w(c) = \sum_{i<j}  o(\Gamma_{i,j}(c))-\omega(\Gamma_{i,j}(c)) .
    \end{equation*}
    Here, $o(\Gamma_{i,j}(c)):=\sum_{g \in \Gamma_{i,j}(c)} o(g)$, and $\omega(\Gamma_{i,j}(c)):=\sum_{g \in \Gamma_{i,j}(c)} \omega(g)$. It follows that 
    \begin{align}
        X_w &= \sum_{c \in \calC(w)}
        q^{\sum_{i<j} o(\Gamma_{i,j}(c))-\omega(\Gamma_{i,j}(c))}
        \prod_i x_i^{|\Gamma_i(c)|} \,, \nonumber \\
        X_w(q^{-N+1},\ldots,q^{N-1}) & = \sum_{c \in \calC(w)} q^{\sum_{i<j} o(\Gamma_{i,j}(c)) - \omega (\Gamma_{i,j}(c))} \, q^{\sum_{i} {(2i-N-1)\, | \Gamma_i(c)} | }  \,. \label{eqn.spec}
    \end{align}
   Recall that $\Gamma_i(c)$ denotes the union of the edges $e$ of $w$ such that $c(e)$ contains $i$, and $\Gamma_{i,j}(c)$ denotes the union of the edges $e$ such that $c(e)$ contains exactly one of $i$ and $j$; $\Gamma_i(c)$ is a disjoint union of positively oriented circles, while $\Gamma_{i,j}(c)$ is a disjoint union of oriented circles by keeping the orientation of the edges colored by $j$ and reversing the orientation of those colored by $i$.
   
    Since $\Gamma_i(c)$ is a disjoint union of positively oriented circles, we have $|\Gamma_i(c)|=\omega(\Gamma_i(c))$. Combining this with the fact $\omega (\Gamma_{j}(c)) - \omega(\Gamma_{i}(c)) = \omega(\Gamma_{i,j}(c))$ for $i<j$, which follows from the definition of the winding number, we deduce that
    \[
        \sum_{i=1}^N (2i-N-1) |\Gamma_i(c)| = \sum_{1 \leq i <j \leq N} \omega(\Gamma_{i,j}(c)).
    \]
    This implies that the right-hand side of~\eqref{eqn.spec} simplifies to $\sum_c q^{\sum_{i<j} o(\Gamma_{i,j}(c))}$, which is  the definition of $\langle w \rangle_{\mathfrak{gl}_N}$; see e.g. \cite{Rob15}. This completes the proof.
\end{proof}

\section{Schur expansions of chromatic symmetric functions}
\label{sec.Schur}

\subsection{Loop decomposition}

Let $w$ be an annular web.  
A \emph{loop} in $w$ is a union of edges of $w$ that forms an oriented circle, with the orientations of the edges agreeing along the circle. For two loops $L_1$ and $L_2$, we write $L_1\prec L_2$ if $L_2$ lies in the right-hand region of the complement of $L_1$.
A \emph{loop decomposition} of $w$ is a collection of loops, with repetitions allowed, such that, for each edge $e$ of $w$, the number of loops containing $e$ is equal to the thickness of $e$. Note that the degree of $w$ is equal to the number of loops in any loop decomposition of $w$.

Let $n$ be the degree of $w$ and let $\alpha$ be a weak composition of $n$. A \emph{$w$-array} $T$ of shape $\alpha$ is an assignment of loops of $w$ to the boxes of the Young diagram of $\alpha$ such that the multiset of assigned loops forms a loop decomposition of $w$, and, in each row $i$, the entries satisfy
\[
    T_{i,1}\prec T_{i,2}\prec\cdots\prec T_{i,\alpha_i}.
\]
In particular, $T_{i,1},\ldots,T_{i,\alpha_i}$ are disjoint loops in $w$.

A $w$-array $T$ of shape $\alpha$ determines a coloring $c$ of $w$ by setting
\begin{equation*}
    c(e)=\{\, i \in \mathbb{N} \mid e \text{ is contained in a loop appearing in the $i$-th row of } T\,\}
\end{equation*}
for every edge $e$ of $w$. Note that the $x$-weight of this coloring $c$ coincides with $x^{\alpha}=\prod_i x_i^{\alpha_i}$.
Conversely, any coloring $c$ whose $x$-weight is $x^{\alpha}$ determines a $w$-array $T$ of shape $\alpha$: the $i$-th row of $T$ is obtained by arranging the loops in $\Gamma_i(c)$ according to the order $\prec$.
By abuse of notation, we write $\inv_w(T)=\inv_w(c)$. Then the definition~\eqref{eqn.chromsym} of the chromatic symmetric function can be rewritten as
\begin{equation*}
    X_w =
    \sum_{\alpha \models_0 n} \left( \sum_{T \in \calA_\alpha(w)} q^{\inv_w(T)} \right) x^{\alpha}
    =
    \sum_{\lambda \vdash n} \left( \sum_{T \in \calA_\lambda(w)} q^{\inv_w(T)} \right) m_\lambda 
\end{equation*}
where $\calA_\alpha(w)$ is the set of all $w$-arrays of shape $\alpha$. Note that the second equality follows from the symmetry of $X_w$.

\subsection{Invertibility and tableaux}

Let $T$ be a $w$-array of shape $\alpha \models_0 n$ and let $c$ be the corresponding coloring of $w$. We say that a cell $(i,j)$ with $i>1$ is \emph{admissible} if 
\begin{equation*}
    1\le j\le \min(\alpha_i,\alpha_{i-1}+1)\,.
\end{equation*}
Fix an admissible cell $(i,j)$ and let
\[
    A = \bigsqcup_{k\geq j} T_{i-1,k} \subset \Gamma_{i-1}(c), \qquad
    B = \bigsqcup_{k>j} T_{i,k} \subset \Gamma_i(c).
\]
Imitating the construction in Section~\ref{sec.SW}, let $\Gamma_{A,B}$ denote the union of the edges of $w$ that belong to exactly one of $A$ and $B$.
We keep the orientation on the $B$-edges and reverse the orientation on the $A$-edges so that $\Gamma_{A,B}$ is a disjoint union of oriented circles. Let $\Gamma^\ast_{A,B}$ be the union of the components of $\Gamma_{A,B}$ with nonzero winding number. Swapping the colors $i-1$ and $i$ along the edges in $\Gamma^\ast_{A,B}$ yields loop families $A'$ and $B'$, colored with $i$ and $i-1$, respectively, such that
\begin{equation*}
    A \cup B = A' \cup B', \qquad
    A \cap B = A' \cap B', \qquad
    |A| = |A'|, \qquad
    |B| = |B'|.
\end{equation*}
We denote by $\SW_{(i,j)}(T)$ the array obtained from $T$ by replacing $A$ and $B$ by $B'$ and $A'$, respectively, as in  Figure~\ref{fig.SWAB}. Note that $\SW_{(i,j)}(T)$ is of shape
\begin{equation*}
    \alpha'=(\ldots,\alpha_{i-2},\alpha_i-1,\alpha_{i-1}+1,\alpha_{i+1},\ldots) \models_0 n
\end{equation*}
which does not depend on $j$.

\begin{figure}[!htbp]
    \centering
    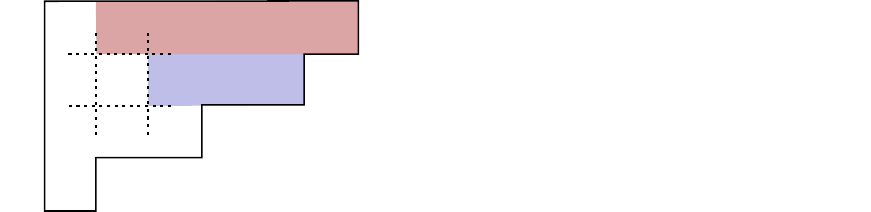
    \caption{The SW operation with respect to the cell $(i,j)$.}
    \label{fig.SWAB}
\end{figure}

Since no modifications are made outside $A'$ and $B'$, the resulting $\SW_{(i,j)}(T)$ need not be a $w$-array. More precisely, after the operation, the row-order condition may fail only at the cells $(i,j)$ and $(i-1,j-1)$, each with respect to its right neighbor.

\begin{definition}
    A $w$-array $T \in \calA_\alpha(w)$ is said to be \emph{invertible} at an admissible cell $(i,j)$ of $\alpha$ if $\SW_{(i,j)}(T)$ is again a $w$-array; equivalently, if
    \[
        \SW_{(i,j)}(T) \in \calA_{\alpha'}(w).
    \]
    We say that $T$ is \emph{invertible} if it is invertible at some admissible cell $(i,j)$ of $\alpha$. A \emph{$w$-tableau} is a $w$-array that is not invertible.
\end{definition}

It follows from the construction that if $T \in \calA_\alpha(w)$ is invertible at $(i,j)$, then $\SW_{(i,j)}(T)$ is also invertible at $(i,j)$. Moreover, $\SW_{(i,j)}(\SW_{(i,j)}(T))=T$.

\begin{example}
    Let $w$ be the annular web with loop decomposition $\{L_1,\ldots,L_4\}$ shown in Figure~\ref{fig.noninv}. Let $T$ be the $w$-array of shape $(2,2)$ whose first row is $L_2,L_4$ and whose second row is $L_1,L_3$. Applying $\SW_{(2,2)}$ to $T$, we have
    \[
        A=\{L_4\}, \quad B=\emptyset, \quad \Gamma^\ast_{A,B}=\{L_4\},  \quad  A'=\{L_4\}, \quad  B'=\emptyset.
    \]
    It follows that $\SW_{(2,2)}(T)$ consists of the same loops $L_1,\ldots, L_4$. However, since $L_3 \nprec L_4$, $\SW_{(2,2)}(T)$ is not a $w$-array.
    
    On the other hand, applying $\SW_{(2,1)}$ to $T$ changes the loop decomposition $\{L_1,\ldots, L_4\}$ into $\{L_1,L_2',L'_3,L'_4\}$ as illustrated in Figure~\ref{fig.noninv}. One checks that $\SW_{(2,1)}(T)$ is a $w$-array; that is,
$T$ is invertible at $(2,1)$.
    
    \begin{figure}[!htbp]
        \centering
        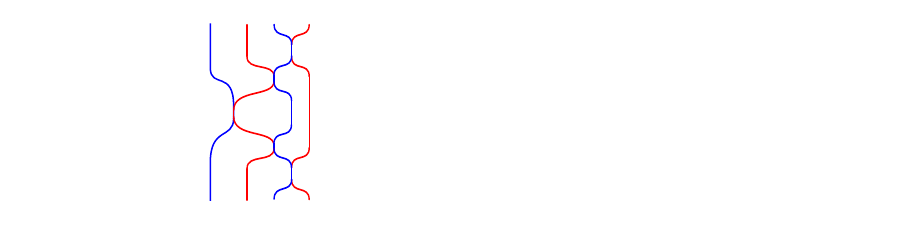
        \caption{An example of the SW operation on a $w$-array.}
        \label{fig.noninv}
    \end{figure}
\end{example}

The following proposition is useful when studying the hook Schur coefficients of the chromatic symmetric function $X_w$.

\begin{proposition}
\label{prop.exist}
Let $T$ be a $w$-array and let $(i,j)$ be an admissible cell of $T$. Suppose that either
\begin{equation}
	\label{eqn.condition}
    T_{i-1,j} = \emptyset \quad \text{or} \quad T_{i,j} \prec T_{i-1,j} \,.
\end{equation}
Then there is at least one cell among $(i,1),\ldots,(i,j)$
at which $T$ is invertible. 
\end{proposition}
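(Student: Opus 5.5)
We argue by downward induction on the column index: we start at the cell $(i,j)$ itself and, whenever $T$ fails to be invertible there, we produce a new cell $(i,j')$ with $j'<j$ that again satisfies the hypothesis~\eqref{eqn.condition}. Since $(i,1)$ is admissible whenever $(i,j)$ is, and the process strictly decreases $j$, it must stop at some cell among $(i,1),\ldots,(i,j)$ at which $T$ is invertible. What remains is to show that the process can always either stop or continue.

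The first step is to analyse exactly when $\SW_{(i,j)}(T)$ fails to be a $w$-array. By the remark preceding the definition of invertibility, only two row-order conditions can fail. The first is at $(i,j)$, where we must check $T_{i,j}\prec$ (first loop of $B'$) in row $i$ after the swap. The second is at $(i-1,j-1)$, where we must check $T_{i-1,j-1}\prec$ (first loop of $A'$) in row $i-1$. The loops of $A'$ and $B'$ are obtained from $A$ and $B$ by swapping along the nonzero-winding components of $\Gamma_{A,B}$. So the leftmost loop of the new family is either the old leftmost loop or a loop formed from pieces of $A$ and $B$. In either case it lies to the right of the leftmost of $T_{i-1,j}$ and $T_{i,j+1}$, up to the pieces exchanged.

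The second step uses hypothesis~\eqref{eqn.condition} at $(i,j)$. If $T_{i-1,j}=\emptyset$, then $A=\emptyset$, every component of $\Gamma_{A,B}$ comes from $B$, and we get $B'=\emptyset$ and $A'=B$. The check in row $i$ becomes vacuous. The check in row $i-1$ is $T_{i-1,j-1}\prec T_{i,j+1}$. If $T_{i,j}\prec T_{i-1,j}$, then everything in $A\cup B$ lies to the right of $T_{i,j}$, so the row-$i$ condition holds automatically.

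So in both cases the only possible obstruction is the row-$(i-1)$ condition at $(i-1,j-1)$. That condition is $T_{i-1,j-1}\prec L$, where $L$ is the leftmost loop of $A'$. When it fails, $T_{i-1,j-1}$ does not lie to the left of a loop built from $T_{i,j}$'s right-hand side. From this we want to deduce $T_{i,j-1}\prec T_{i-1,j-1}$, which is condition~\eqref{eqn.condition} at the cell $(i,j-1)$. The justification is that $T_{i,j-1}\prec T_{i,j}$, and that $T_{i-1,j-1}$ cannot be to the left of $T_{i,j-1}$ without being to the left of all of $A'$. If $j=1$, the row-$(i-1)$ condition is vacuous, so $T$ is invertible at $(i,1)$ and the induction terminates.

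The main obstacle is making the planar claims rigorous. These are the claims that the leftmost loop of $A'$ or $B'$ is bounded on the left by the leftmost loops of $A$ and $B$, and that failure of $T_{i-1,j-1}\prec L$ forces $T_{i,j-1}\prec T_{i-1,j-1}$. For this we would use the fact that loops in a single color class are disjoint essential circles, totally ordered by $\prec$. The swap along $\Gamma^\ast_{A,B}$ only exchanges arcs between $A$ and $B$, so it stays within the region to the right of $\min(T_{i-1,j},T_{i,j+1})$. We would try to formalize this by comparing, for each radial cut, the relative positions of the colored loops.
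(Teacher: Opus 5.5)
\paragraph{Verdict.} Your proposal has a genuine gap: the inductive step fails, because \eqref{eqn.condition} is the wrong invariant to carry from $(i,j)$ to $(i,j-1)$.

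\paragraph{What is right.} Your overall scheme matches the paper's: walk leftward along row $i$ from $(i,j)$ and stop at $(i,1)$, where the row-$(i-1)$ check is vacuous. Your observation that \eqref{eqn.condition} makes the row-$i$ check at $(i,j)$ automatic is also correct.

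\paragraph{The failing step.} The implication ``$T$ is not invertible at $(i,j)$ $\Rightarrow$ $T_{i,j-1}\prec T_{i-1,j-1}$'' is false. Your justification implicitly treats $\prec$ as a total order. Loops in different rows can share edges at merge--split pairs, and then neither $X\prec Y$ nor $Y\prec X$ holds. The total order you invoke exists only within a single row.

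\paragraph{A concrete example.} Take $i=2$, with row $1$ equal to $(P)$ and row $2$ equal to $(Q_1,Q_2,Q_3)$. Let $Q_1,Q_2,Q_3$ be parallel essential circles, and let $P$ cross $Q_1,Q_2,Q_3$ (each crossing a merge--split pair) to reach the right of $Q_3$, then cross back.
\begin{itemize}
\item The cell $(2,2)$ is admissible and $T_{1,2}=\emptyset$, so \eqref{eqn.condition} holds there.
\item $\SW_{(2,2)}$ moves $Q_3$ into row $1$. Since $P\not\prec Q_3$, $T$ is not invertible at $(2,2)$.
\item But $Q_1\not\prec P$, so \eqref{eqn.condition} fails at $(2,1)$, and your induction stalls.
\item Nevertheless $T$ is invertible at $(2,1)$. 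Here $\Gamma^\ast_{A,B}$ is a single component. After the swap, row $2$ becomes $Q_1$ followed by a loop made of arcs of $Q_2$ and $Q_3$ and the two short arcs of $P$ between them. That loop is disjoint from $Q_1$ and lies to its right.
\end{itemize}

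\paragraph{The missing idea.} You need to propagate a weaker property. Write $\Gamma^\ast_r$ for $\Gamma^\ast_{A,B}$ built at the cell $(i,r)$. The paper tracks $I_r$: ``after $\SW_{(i,r)}$, the row-$i$ inequality at $(i,r)$ holds.''
\begin{itemize}
\item \eqref{eqn.condition} implies $I_j$, but $I_r$ does not imply \eqref{eqn.condition} at $(i,r)$; the example above shows this for $r=1$.
\item The key lemma: if $I_r$ holds and the row-$(i-1)$ inequality $J_r$ at $(i-1,r-1)$ fails, then $I_{r-1}$ holds.
\item Its proof uses a fact absent from your proposal. Once merge--split pairs between $A$- and $B$-loops are viewed as transverse crossings, components of $\Gamma^\ast_r$ and $\Gamma^\ast_{r-1}$ cannot cross.
\item Given $I_r$, all of $\Gamma^\ast_r$ lies to the right of $T_{i,r}$. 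A component $\alpha$ witnessing the failure of $J_r$ then sits locally between $T_{i,r}$ and $T_{i-1,r-1}$.
\item Suppose $\beta$ is a component of $\Gamma^\ast_{r-1}$ witnessing a failure of $I_{r-1}$. By non-crossing, $\beta$ would lie on the left of $\alpha$. But membership in $\Gamma^\ast_{r-1}$ forces $\beta$ to the right of $\alpha$ in that local picture, a contradiction.
\end{itemize}
With $I_r$ in place of \eqref{eqn.condition}, your leftward walk goes through: take the minimal $r$ with $I_r$ holding, and the lemma forces $r=1$.
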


\begin{proof}
	For an admissible cell $(i,r)$ with $1\leq r\leq j$, let
	\[
	\Gamma_r^\ast:=\Gamma_{A_r,B_r}^\ast,
	\qquad
	A_r=\bigsqcup_{k\geq r}T_{i-1,k},
	\qquad
	B_r=\bigsqcup_{k>r}T_{i,k}.
	\]
	To simplify the exposition, whenever a loop in $A_r$ and a loop
	in $B_r$ form a merge--split pair, we regard this pair as a
	transverse crossing, as illustrated in the figure below.  This
	convention does not change $\Gamma_r^\ast$, and hence the operation $\SW_{(i,r)}$
	is unchanged.  We first observe that components of $\Gamma_r^\ast$ and
	$\Gamma_{r'}^\ast$ for any $ 1 \leq r<r' \leq j$ cannot cross each other.  Indeed, under the
	convention above, this follows immediately from the local
	configurations at a transverse crossing.  
	\begin{figure}[!htbp]
		\centering
\begingroup%
  \makeatletter%
  \providecommand\color[2][]{%
    \errmessage{(Inkscape) Color is used for the text in Inkscape, but the package 'color.sty' is not loaded}%
    \renewcommand\color[2][]{}%
  }%
  \providecommand\transparent[1]{%
    \errmessage{(Inkscape) Transparency is used (non-zero) for the text in Inkscape, but the package 'transparent.sty' is not loaded}%
    \renewcommand\transparent[1]{}%
  }%
  \providecommand\rotatebox[2]{#2}%
  \newcommand*\fsize{\dimexpr\f@size pt\relax}%
  \newcommand*\lineheight[1]{\fontsize{\fsize}{#1\fsize}\selectfont}%
  \ifx\svgwidth\undefined%
    \setlength{\unitlength}{280.49300306bp}%
    \ifx\svgscale\undefined%
      \relax%
    \else%
      \setlength{\unitlength}{\unitlength * \real{\svgscale}}%
    \fi%
  \else%
    \setlength{\unitlength}{\svgwidth}%
  \fi%
  \global\let\svgwidth\undefined%
  \global\let\svgscale\undefined%
  \makeatother%
  \begin{picture}(1,0.20735716)%
    \lineheight{1}%
    \setlength\tabcolsep{0pt}%
    \put(0,0){\includegraphics[width=\unitlength,page=1]{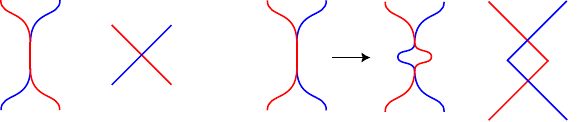}}%
    \put(0.1237076,0.10300969){\color[rgb]{0,0,0}\makebox(0,0)[lt]{\lineheight{1.25}\smash{\begin{tabular}[t]{l}$=$\end{tabular}}}}%
    \put(0.78839202,0.09759735){\color[rgb]{0,0,0}\makebox(0,0)[lt]{\lineheight{1.25}\smash{\begin{tabular}[t]{l}$=$\end{tabular}}}}%
  \end{picture}%
\endgroup%

	\end{figure}
	
	After applying $\SW_{(i,r)}$, the row-order condition can fail
	only at $(i,r)$ and $(i-1,r-1)$; let $I_r$ denote the row-order
	inequality at $(i,r)$ with respect to its right neighbor, and let
	$J_r$ denote the row-order inequality at $(i-1,r-1)$ with
	respect to its right neighbor.  As usual, an inequality is
	regarded as satisfied if the corresponding right neighbor does
	not exist.
	
	\smallskip
	\noindent \textit{Claim}. If $I_r$ holds and $J_r$ fails for
	$2\leq r\leq j$, then $I_{r-1}$ holds.

    \smallskip \noindent \textit{Proof of Claim.}
Choose a component $\alpha$ of $\Gamma_r^\ast$ responsible for the
failure of $J_r$. This means that some portion of $\alpha$ is colored by
$i$ and lies to the left of $T_{i-1,r-1}$. Since
$\alpha\subset\Gamma_r^\ast$, this $i$-colored portion of $\alpha$ lies
to the right of $T_{i,r}$. Hence, locally, the four curves occur from
left to right in the order
\begin{equation}
	\label{eqn.local}
	T_{i,r-1},\qquad T_{i,r},\qquad
	\alpha,\qquad T_{i-1,r-1}.
\end{equation}
If there is more than one such component $\alpha$, we choose the leftmost
one with respect to the local order~\eqref{eqn.local}.

Now suppose that $I_{r-1}$ fails. Choose a component $\beta$ of
$\Gamma_{r-1}^\ast$ responsible for this failure. This means that some portion of
$\beta$ is colored by $i-1$ and lies to the left of $T_{i,r-1}$.
Thus, locally, the three curves occur from left to right in the order
\begin{equation}
	\label{eqn.local2}
	\beta,\qquad T_{i,r-1},\qquad T_{i,r}.
\end{equation}

On the other hand, since $I_r$ holds, the $(i-1)$-colored portion of
$\Gamma_r^\ast$ lies to the right of $T_{i,r}$. Moreover, by the row-order
condition, the $i$-colored portion of $\Gamma_r^\ast$ also lies to the right
of $T_{i,r}$. Hence the whole of $\Gamma_r^\ast$ lies to the right of
$T_{i,r}$; in particular $\alpha$ lies to the right of $T_{i,r}$.
Combining the local order~\eqref{eqn.local2} with the fact that
$\alpha$ and $\beta$ cannot cross, we conclude that $\beta$
lies in the exterior region of $\alpha$.

Finally, it follows from $\beta\subset\Gamma_{r-1}^\ast$ that $\beta$
cannot lie to the left of both $T_{i,r}$ and $T_{i-1,r-1}$. Thus, in
the local order~\eqref{eqn.local}, $\beta$ must appear to the right of
$\alpha$, possibly with $\beta=T_{i-1,r-1}$. This contradicts the fact
that $\beta$ lies in the exterior region of $\alpha$. This proves the claim.

We now prove the proposition. The assumption~\eqref{eqn.condition}
implies that $I_j$ holds. Suppose, toward a contradiction, that $T$
is not invertible at any of the cells $(i,1),\ldots,(i,j)$.
Let
\[
r=\min\{\,1\leq s\leq j \mid I_s \text{ holds}\,\}.
\]
This is well-defined since $I_j$ holds. We claim that $r=1$.
Indeed, if $r\geq2$, then, since $T$ is not invertible at $(i,r)$
and $I_r$ holds, $J_r$ must fail. The Claim then implies that
$I_{r-1}$ holds, contradicting the minimality of $r$.
Thus $r=1$, so $I_1$ holds. Since $J_1$ holds automatically, $T$ is invertible
at $(i,1)$, a contradiction. Therefore, $T$ is invertible at at least
one of the cells $(i,1),\ldots,(i,j)$.
\end{proof}

\subsection{Hook Schur coefficients}

We devote this subsection to proving Theorem~\ref{thm.mainB}, which gives the hook Schur coefficients of the chromatic symmetric function $X_w$.

Let $w$ be an annular web of degree $n$, and let $\lambda$ be a hook partition of $n$. Recall that the Jacobi--Trudi identity and the Hall inner product give
\begin{align}
\label{eq:schur-coeff-jt}
    [s_\lambda]X_w
    =
    \langle X_w,s_\lambda\rangle
    & =
    \sum_{\sigma\in S_{\ell(\lambda)}}\sgn(\sigma)
    \langle X_w,h_{\sigma\cdot\lambda}\rangle                          \nonumber\\
    & =
    \sum_{\sigma\in S_{\ell(\lambda)}}\sgn(\sigma)
    [x^{\sigma\cdot\lambda}]X_w                                        \nonumber\\
    & =
    \sum_{\sigma\in S_{\ell(\lambda)}}\sgn(\sigma)
    \sum_{T\in\calA_{\sigma\cdot\lambda}(w)} q^{\inv_w(T)} \,,
\end{align}
where only those $\sigma$ for which $\sigma\cdot\lambda$ is a weak composition contribute.

\begin{lemma}
\label{lem:hook-composition}
Let $\lambda$ be a hook partition and let $\sigma \in S_{\ell(\lambda)}$. If $\sigma\cdot\lambda$ is a weak composition, then, for every $2\le i\le \ell(\lambda)$,
\[
    (\sigma\cdot\lambda)_i>1
    \quad\Longrightarrow\quad
    (\sigma\cdot\lambda)_{i-1}=0.
\]
In other words, every row of $\sigma\cdot\lambda$ other than the first that contains more than one box
is immediately preceded by an empty row.
\end{lemma}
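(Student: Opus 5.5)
We prove the lemma by writing out $\sigma\cdot\lambda$ explicitly. Write $\lambda=(k,1^{m})$ with $\ell:=\ell(\lambda)=m+1$, so that $\lambda_1=k$ and $\lambda_p=1$ for $2\le p\le \ell$. By definition, $(\sigma\cdot\lambda)_i=\lambda_{\sigma(i)}+i-\sigma(i)$. If $\sigma(i)\ge 2$, this equals $1+i-\sigma(i)$. If $\sigma(i)=1$, it equals $k+i-1$.

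Fix $2\le i\le \ell$ with $(\sigma\cdot\lambda)_i>1$. We consider two cases according to whether $\sigma(i)$ equals $1$.

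Suppose first that $\sigma(i)\ge2$. Then $1+i-\sigma(i)>1$ forces $\sigma(i)<i$.

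The key step is a counting argument on the positions $1,\dots,i-1$, using that $\sigma\cdot\lambda$ is a weak composition. For every $j$ with $\sigma(j)\ge 2$, nonnegativity of $(\sigma\cdot\lambda)_j=1+j-\sigma(j)$ gives $\sigma(j)\le j+1$. At most one index $j$ has $\sigma(j)=1$.

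Now look at the values $\sigma(1),\dots,\sigma(i-1)$. Each of them lies in $\{1,\dots,i\}$: if $\sigma(j)=1$ this is clear, and otherwise $\sigma(j)\le j+1\le i$. These are $i-1$ distinct values taken from the $i$ values $\{1,\dots,i\}$. On the other hand, $\sigma(i)\in\{1,\dots,i-1\}$ is already used by position $i$, so it cannot appear among them. Hence $\{\sigma(1),\dots,\sigma(i-1)\}=\{1,\dots,i\}\setminus\{\sigma(i)\}$, and in particular the value $i$ is attained by some $j\le i-1$. Since $\sigma(j)\le j+1$, this forces $j=i-1$, that is, $\sigma(i-1)=i$. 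Because $i\ge2$, we get $(\sigma\cdot\lambda)_{i-1}=1+(i-1)-i=0$, as desired.

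Suppose next that $\sigma(i)=1$. Then $(\sigma\cdot\lambda)_i=k+i-1$, which is indeed larger than $1$. Positions $1,\dots,i-1$ all have $\sigma(j)\ge2$, hence $\sigma(j)\le j+1$. By the same pigeonhole argument, these $i-1$ distinct values lie in $\{2,\dots,i\}$, so they exhaust it. Therefore $\sigma(j)=j+1$ for all $j<i$; one sees this inductively, since $\sigma(1)\le 2$ forces $\sigma(1)=2$, and so on. In particular $\sigma(i-1)=i$, which again gives $(\sigma\cdot\lambda)_{i-1}=0$.

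The only delicate point is the pigeonhole step identifying $\sigma(i-1)=i$. Both cases rest on the bound $\sigma(j)\le j+1$ for $\sigma(j)\ne1$, which follows from nonnegativity of the parts, so I expect no real obstacle beyond handling the $\sigma(j)=1$ exception carefully.
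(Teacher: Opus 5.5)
Your proof is correct and uses the same idea as the paper's: the bound $\sigma(j)\le j+1$ whenever $\sigma(j)\neq 1$, combined with a pigeonhole count on the positions $1,\dots,i$. The paper argues by contradiction, showing that $(\sigma\cdot\lambda)_{i-1}>0$ would force $\sigma(1),\dots,\sigma(i)\in\{1,\dots,i-1\}$, whereas you argue directly and get the slightly sharper conclusion $\sigma(i-1)=i$; the difference is only cosmetic.
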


\begin{proof}
Set $\mu=\sigma\cdot\lambda$ and suppose that $\mu_i>1$. If \(\sigma(i)=1\), then \(\sigma(i)<i\) since \(i\ge2\). Otherwise, \(\sigma(i)>1\), so \(\lambda_{\sigma(i)}=1\), and
$$ \mu_i=1+i-\sigma(i)>1 $$
implies \(\sigma(i)<i\).
Then $\sigma(i-1)\le i-1$: this is immediate if $\sigma(i-1)=1$, while if $\sigma(i-1)>1$, then $0<\mu_{i-1}=1+(i-1)-\sigma(i-1)$.
Similarly, for every $j \le i-2$, the nonnegativity of $\mu_j$ implies $\sigma(j)\le j+1\le i-1$ whenever $\sigma(j)>1$, and the same inequality is automatic when $\sigma(j)=1$. Hence
\[
    \sigma(1),\ldots,\sigma(i)\in\{1,\ldots,i-1\},
\]
which is impossible. Therefore, $\mu_{i-1}=0$.
\end{proof}

Lemma~\ref{lem:hook-composition} implies that, for every $i>1$,
\[
    \min\bigl((\sigma\cdot\lambda)_i,(\sigma\cdot\lambda)_{i-1}+1\bigr)\le 1.
\]
Thus, by the definition of admissible cells, every admissible cell of a
$w$-array $T$ of shape $\sigma\cdot\lambda$ lies in the first column.
In addition, combining Lemma~\ref{lem:hook-composition} with Proposition~\ref{prop.exist}, we deduce that an admissible cell $(i,1)$ is
invertible if and only if $T_{i-1,1}=\emptyset$ or $T_{i,1}\prec T_{i-1,1}$.

We first determine which $w$-arrays
$T\in\calA_{\sigma\cdot\lambda}(w)$ are noninvertible. Suppose that
$\sigma\ne\mathrm{id}$. If every part of $\sigma\cdot\lambda$ is
positive, then by Lemma~\ref{lem:hook-composition}, $(\sigma\cdot\lambda)_i=1$ for all $2\le i \le \ell(\lambda)$ and thus $(\sigma\cdot\lambda)=\lambda$. This forces $\sigma=\mathrm{id}$, a contradiction. It follows that 
$\sigma\cdot\lambda$ has an empty row. On the other hand, its last row is never empty, since
\[
    (\sigma\cdot\lambda)_{\ell(\lambda)}
    =
    \lambda_{\sigma(\ell(\lambda))}
    +\ell(\lambda)-\sigma(\ell(\lambda))
    >0.
\]
Therefore, every $T\in\calA_{\sigma\cdot\lambda}(w)$ has a nonempty row
whose preceding row is empty, and $T$ is
invertible at the first cell of such a row. Hence every
$T\in\calA_{\sigma\cdot\lambda}(w)$ is invertible whenever
$\sigma\ne\mathrm{id}$.
It follows that the only noninvertible $w$-arrays occur when
$\sigma=\mathrm{id}$, in which case the shape is $\lambda$. By
definition, these are precisely the $w$-tableaux in
$\calT_\lambda(w)$.

It remains to cancel all invertible $w$-arrays in pairs. For this
purpose, it is convenient to retain the permutation indexing the
corresponding Jacobi--Trudi term. Consider a pair $(\sigma,T)$, where $\sigma\in S_{\ell(\lambda)}$ and
$T\in\calA_{\sigma\cdot\lambda}(w)$ is invertible, and set
\[
    I=I(T):=\max\{\,i\mid T\text{ is invertible at }(i,1)\,\}.
\]
Since $T$ is invertible at $(I,1)$, $T'=\SW_{(I,1)}(T)$ is again a $w$-array, and its shape is $\sigma'\cdot\lambda$, where $\sigma'=\sigma \circ (I-1, I)$. Note that $\sgn(\sigma')=-\sgn(\sigma)$.

As in Section~\ref{sec.SW}, the operation $\SW_{(I,1)}$ preserves the inversion statistic. Indeed,
$\SW_{(I,1)}$ exchanges only the two adjacent colors $I-1$ and $I$
along the components of $\Gamma^\ast_{A,B}$. Since
exchanging two adjacent colors does not change their relative order
with any other color, only the contribution from the pair of colors
$I-1$ and $I$ can change. The same calculation as in the proof of
Theorem~\ref{thm.SW}, applied to the components of
$\Gamma^*_{A,B}$, shows that each nonzero-winding component contributes
zero to this change. Hence
\[
    \inv_w(T')=\inv_w(T).
\]

We next verify that the choice of $I$ is unchanged by the operation. The
cell $(I,1)$ remains invertible, since $\SW_{(I,1)}$ is an involution on
the set of $w$-arrays invertible at $(I,1)$. Moreover, the operation leaves
every row below the $I$-th row unchanged, as well as the cell
$(I,1)$. Since every admissible cell lies in the first column,
For $i>I$, the invertibility
of $(i,1)$ depends only on whether the $(i-1)$-st row is empty and on
the two first-column entries $T_{i-1,1}$ and $T_{i,1}$. All of these
data are unchanged by $\SW_{(I,1)}$. Thus no cell below row $I$
becomes invertible, and therefore
\[
    I(T')=I(T).
\]
It follows that applying the same construction to $(\sigma',T')$
returns $(\sigma,T)$. Hence
\[
    (\sigma,T)\longmapsto
    \bigl(
        \sigma \circ (I-1,I), \ 
        \SW_{(I(T),1)}(T)
    \bigr)
\]
defines a sign-reversing, $q$-weight-preserving involution on all terms
of~\eqref{eq:schur-coeff-jt}, except for the pairs
$(\mathrm{id},T)$ with $T\in\calT_\lambda(w)$. Therefore, all invertible terms cancel in pairs, and
 \eqref{eq:schur-coeff-jt} reduces to
\[
    [s_\lambda]X_w = \sum_{T\in\calT_\lambda(w)} q^{\inv_w(T)}.
\]
This proves Theorem~\ref{thm.mainB}.

\begin{remark}
The above proof does not extend directly to partitions $\lambda$ that are not hook-shaped.
Indeed, we have an explicit example involving three $w$-arrays
$T_1,T_2,T_3$, where $T_1$ and $T_3$ are obtained from $T_2$ by applying
the $\SW$-operation at certain cells, and there are no further $\SW$-relations with the others.
This means that the cancellation strategy used in the proof does not account for all three arrays.
\end{remark}

\section{The HOMFLY--PT polynomial of annular webs}
\label{sec:homfly-web}

We introduce an LLT function for annular webs, establish its plethystic
relation with the chromatic symmetric function $X_w$, and use it to compute the
HOMFLY--PT polynomial for annular webs. 

\subsection{LLT functions for annular webs}

Let $w$ be an annular web of degree $n$.
Fix a radial segment $I$ transverse to $w$ and disjoint from its vertices.
Let $e_1,\ldots,e_r$ be the edges intersecting $I$, listed from left to right,
and put
\[
    t(I)=(t(e_1),\dots,t(e_r)).
\]
We call $t(I)$ the \emph{initial thickness} relative to $I$. Assign to each $e_j$ the consecutive block
\[
    B_j=\{ t(e_1)+\cdots+t(e_{j-1})+1,
          \ldots,t(e_1)+\cdots+t(e_{j})\},
\]
so that $B_1,\ldots, B_r$ form a partition of $[n]=\{1,\ldots,n\}$.

A \emph{standard labeled loop decomposition} of the pair $(w,I)$ is a map from the edge set of $w$ to the power set of $[n]$ 
\[
    \mathcal L:E(w)\longrightarrow \mathcal{P}({[n]})
\]
satisfying the following conditions:
\begin{enumerate}
 	\item $\mathcal L(e_j)=B_j$ for $1\leq j\leq r$;
    \item $|\mathcal L(e)|=t(e)$ for every $e\in E(w)$;
    \item at every merge or split, the label set on the single edge on one
          side is the disjoint union of the label sets on the two edges
          on the other side.
\end{enumerate}
For each $u\in[n]$, the edges whose label sets contain $u$ form an oriented
loop, and we denote this loop by $L_u$.

For a standard labeled loop decomposition $\mathcal{L}$, we define $\mathsf A_I(\mathcal L)$ to
be the multiset of \emph{ordered attacking pairs} obtained as follows:
\begin{enumerate}
    \item for each $B_j$ and each $u<v$ in $B_j$, include one pair $(u,v)$;
          these are the \emph{initial attacks};
    \item at each merge, include one pair $(u,v)$ whenever $L_u$ enters
          from the left and $L_v$ enters from the right.
\end{enumerate}
All occurrences are counted with multiplicity.

For a map $\sigma:[n]\to\mathbb Z_{>0}$, we regard $\sigma(u)$
as the \emph{color} of $L_u$. No properness condition is imposed. For an attacking pair $(u,v)\in\mathsf A_I(\mathcal L)$, compare
the colors and break ties by decreasing loop index. The contribution is
\begin{equation}
\label{eq:llt-attack-sign}
    \epsilon_\sigma(u,v)
    :=
    \begin{cases}
        +1, & \sigma(v)>\sigma(u)
              \text{ or }(\sigma(v)=\sigma(u)\text{ and }v<u),\\
        -1, & \text{otherwise}.
    \end{cases}
\end{equation}
Define
\begin{equation}
\label{eq:llt-inversion}
    \inv_{\mathcal L}(\sigma)
    :=\sum_{(u,v)\in\mathsf A_I(\mathcal L)}\epsilon_\sigma(u,v),
    \qquad
    x^\sigma:=\prod_{u=1}^n x_{\sigma(u)}.
\end{equation}
For an attacking pair $(u,v)$ arising at a merge with
$\sigma(u)\neq\sigma(v)$, the contribution is
$\operatorname{sgn}(\sigma(v)-\sigma(u))$,
as in the definition of $X_w$.

\begin{definition}
\label{def:web-llt}
The \emph{web LLT function of $w$ relative to $I$} is
\begin{equation*}
    \LLT_{w,I}[\mathbf x;q]
    :=\frac{1}{[t(I)]!}
      \sum_{\mathcal L\in\operatorname{SLD}(w,I)}
      \ \sum_{\sigma:[n]\to\mathbb Z_{>0}}
      q^{\inv_{\mathcal L}(\sigma)}x^\sigma,
\end{equation*}
where $\operatorname{SLD}(w,I)$ is the set of all standard labeled loop decompositions of $(w,I)$.
\end{definition}

It is immediate that the web LLT function is homogeneous of degree $n$ with coefficients in $\mathbb Q(q)$. Its symmetry and independence of the choice of radial cut \(I\) follow from Proposition~\ref{prop:web-llt-plethysm} below.

\subsection{The plethystic relation}

We use plethystic notation in the symmetric-function variables. A substitution $f[A;q]$ is
computed by writing $f$ in the power-sum basis and replacing each $p_m$ by
$p_m[A]$, extending multiplicatively and $\mathbb Q(q)$-linearly. The
substitutions needed below are determined by
\begin{equation}
\label{eq:plethystic-conventions}
    p_m\!\left[\frac{\mathbf x}{q-q^{-1}}\right]
      =\frac{p_m[\mathbf x]}{q^m-q^{-m}},
    \qquad
    p_m[a-a^{-1}]=a^m-a^{-m}.
\end{equation}

Carlsson and Mellit~\cite{CM18} established a plethystic relation between
unicellular LLT polynomials and chromatic quasisymmetric functions. We use
the following extension due to Kim and the first author~\cite{KO26prep}:
if an annular web $v$ of degree $n$ has a radial cut $J$ of initial
thickness $(1^n)$, then, in the signed convention used here,
\begin{equation}
\label{eq:thin-boundary-plethysm}
    \LLT_{v,J}[\mathbf x;q]
    =(q-q^{-1})^n
      X_v\!\left[\frac{\mathbf x}{q-q^{-1}};q\right].
\end{equation}
The case of general initial thickness reduces to the
thin-boundary case by the digon relation for $X_w$.

\begin{proposition}
\label{prop:web-llt-plethysm}
Let $w$ be an annular web of degree $n$. Then for any radial cut $I$,
\begin{equation*}
    \LLT_{w,I}[\mathbf x;q]
    =(q-q^{-1})^n
      X_w\!\left[\frac{\mathbf x}{q-q^{-1}};q\right].
\end{equation*}
Consequently, $\LLT_{w,I}$ is symmetric and independent of $I$; we write
it as $\LLT_w$.
\end{proposition}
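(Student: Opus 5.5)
The plan is to reduce to the thin-boundary identity \eqref{eq:thin-boundary-plethysm} by ``exploding'' every thick edge crossing the cut $I$. Let $t(I)=(t_1,\ldots,t_r)$. For each edge $e_j$ crossing $I$ with $t_j\ge 2$, insert just below $I$ a split of $e_j$ into $t_j$ parallel thin strands, and just above $I$ the corresponding merge back. This can be done by iterated splits $t_j\to 1+(t_j-1)\to\cdots$ and merges. Call the resulting web $v$, and let $J$ be the radial cut through the middle, which now has initial thickness $(1^n)$. By the digon relation (the first web relation, already checked for $X_w$ in the proposition on web relations), iterated digon removal gives
\[
  X_v=[t(I)]!\,X_w,
\]
and hence $(q-q^{-1})^nX_v[\mathbf x/(q-q^{-1});q]=[t(I)]!\,(q-q^{-1})^nX_w[\mathbf x/(q-q^{-1});q]$. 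Since the plethystic substitution is $\mathbb Q(q)$-linear, combining this with \eqref{eq:thin-boundary-plethysm} for $(v,J)$ reduces the proposition to the purely combinatorial identity
\[
  \LLT_{v,J}=[t(I)]!\,\LLT_{w,I}.
\]

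To prove this identity, I will compare the standard labeled loop decompositions. A decomposition $\mathcal L'\in\operatorname{SLD}(v,J)$ assigns the thin strands at $J$ the labels $1,\ldots,n$ in order. The edge $e_j$ of $w$ below the exploded region then receives the block $B_j$, and each loop $L_u$ passes through the region and is otherwise routed as in some $\mathcal L\in\operatorname{SLD}(w,I)$. Inside the exploded region of $e_j$, the labels in $B_j$ travel along the $t_j$ thin strands. The only freedom is the order in which the tree of merges above $J$ reassembles them, and this is already forced by the labels at $J$.

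The delicate point is the bookkeeping. The initial attacks of $\mathsf A_I(\mathcal L)$, namely the pairs $u<v$ in $B_j$, must be matched by the merge attacks occurring inside the exploded region of $v$. On the $\mathcal L'$ side, however, the labels entering those merges are determined relative to the labels on the split side below $J$. So I would instead sum over the bijections by which the split tree distributes $B_j$ to the thin strands, that is, over permutations $\pi\in S_{t_j}$.

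For a fixed coloring $\sigma$, the merge attacks inside the exploded region together contribute $q^{\sum\epsilon_\sigma}$, summed over $\pi$. The key local lemma I would prove is
\[
  \sum_{\pi\in S_k} q^{\text{(attacks of $\pi$ under $\sigma$)}} = [k]!\cdot q^{\sum_{u<v} \epsilon_\sigma(u,v)}.
\]
This is a signed Mahonian identity $\sum_\pi q^{\text{signed inv}}=[k]!$, twisted by the coloring. Ties among equal colors are broken by decreasing index exactly as in \eqref{eq:llt-attack-sign}, so the colored pairs behave like a total order on $B_j$. After relabeling by that total order, the sum over $\pi$ becomes the symmetric $q$-factorial times the contribution of the identity arrangement.

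Summing over $\mathcal L$, $\sigma$ and all blocks then yields $\LLT_{v,J}=[t(I)]!\,\LLT_{w,I}$, with the normalization in Definition~\ref{def:web-llt} cancelling the factor $[t(I)]!$. Symmetry of $\LLT_{w,I}$ and its independence of $I$ follow at once from the right-hand side, since $X_w$ is symmetric by Corollary~\ref{cor.sym}.

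I expect the main obstacle to be the careful matching of attack conventions in the exploded region. This means checking that the left/right entering order at merges, the tie-breaking rule, and the signed $q$-counting really reproduce $[k]!\,q^{\text{initial attacks}}$, rather than a power of $q$ shifted by something like $\binom{k}{2}$. If the shift appears, I would absorb it by exploding with splits on one side only, or by choosing left-versus-right nested merges. The symmetric normalization $[k]!=\prod[i]$ is what makes this balancing work.
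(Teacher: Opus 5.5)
Your reduction matches the paper's: insert split--merge digons on the thick edges meeting $I$, use the digon relation to get $X_v=[t(I)]!\,X_w$, reduce via \eqref{eq:thin-boundary-plethysm} to $\LLT_{v,J}=[t(I)]!\,\LLT_{w,I}$, and read off symmetry and independence of $I$ from the right-hand side. The gap is in your proof of this last identity.

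\textbf{There is no sum over $\pi\in S_{t_j}$ to perform.} Condition (1) in the definition of $\operatorname{SLD}(v,J)$ fixes the labels $\{1\},\ldots,\{n\}$, from left to right, on the thin strands at $J$. These strands are exactly the leaves of the inserted split tree and the inputs of the inserted merge tree. So every label in the exploded region is forced, and restriction $\operatorname{SLD}(v,J)\to\operatorname{SLD}(w,I)$ is a bijection. You note this yourself a paragraph earlier, and the merges do not see "labels on the split side"; their inputs are the strands at $J$.

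\textbf{Your key local lemma is false.} For $k=2$ the two arrangements contribute $q^{\epsilon_\sigma(u,v)}+q^{\epsilon_\sigma(v,u)}=q+q^{-1}=[2]$, because the tie-breaking rule makes $\epsilon_\sigma$ antisymmetric. You predict $[2]\,q^{\epsilon_\sigma(u,v)}$ instead. The correct statement, $\sum_{\pi\in S_k}q^{(\text{attacks of }\pi)}=[k]!$ for every $\sigma$, erases the initial attacks rather than reproducing them. Moreover, if the $\pi$ really indexed distinct elements of $\operatorname{SLD}(v,J)$, your version of the lemma would give $([t(I)]!)^2\,\LLT_{w,I}$, not $[t(I)]!\,\LLT_{w,I}$. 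So the bookkeeping is inconsistent even on its own terms.

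\textbf{What is needed is a direct check that the bijection preserves $\mathrm{inv}$ for every coloring $\sigma$.}
\begin{itemize}
\item Splits contribute no attacks.
\item The inserted merges lie just above $J$ and receive the thin strands labeled by $B_j$ in increasing order from left to right.
\item At the merge where $L_u$ and $L_v$ ($u<v$ in $B_j$) first come together, $L_u$ enters on the left and $L_v$ on the right.
\item Hence the region contributes exactly one attack $(u,v)$ for each pair $u<v$ in $B_j$, for any merge tree whose leaves are in this order. The paper uses the successive left-to-right merge tree, with the split tree its reverse, so that the digons can be removed one at a time.
\end{itemize}
These attacks are precisely the initial attacks in $\mathsf A_I(\mathcal L)$, and $J$ itself has none. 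So the weights agree term by term and there is no $\binom{k}{2}$ shift to absorb. The factor $[t(I)]!$ comes solely from the normalizations $1/[1^n]!=1$ versus $1/[t(I)]!$ in Definition~\ref{def:web-llt}, not from any $q$-factorial summation on the LLT side.
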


\begin{proof}
For each edge of thickness $d$ meeting $I$, insert a split--merge web that
splits the edge into $d$ unit strands and then merges them back. Choose the
merge tree to combine the unit strands successively from left to right,
and choose the split tree to be its reverse. Denote the resulting web by
$\widetilde w$, and choose a cut $J$ through all the inserted unit strands, so that
its initial thickness is $(1^n)$.
Repeated digon removal gives
\begin{equation}
\label{eq:thin-resolution-chromatic}
    X_{\widetilde w}=[t(I)]!\,X_w.
\end{equation}

Label the unit strands along $J$ increasingly from left to right.
Restriction to the original web gives a bijection
\[
    \operatorname{SLD}(\widetilde w,J)
    \longrightarrow \operatorname{SLD}(w,I).
\]
Its inverse extends each label along the uniquely prescribed branch of the
inserted split and merge trees. Within $B_j$, the inserted merges produce
exactly one attack $(u,v)$ for every $u<v$. Thus these merge attacks are
precisely the initial attacks of $\mathcal L$, and the bijection preserves
the inversion statistic for every loop coloring. Since $J$ has no initial
attacks and its normalizing factor is $1$, we obtain
\begin{equation}
\label{eq:thin-resolution-llt}
    \LLT_{\widetilde w,J}=[t(I)]!\,\LLT_{w,I}.
\end{equation}
Applying \eqref{eq:thin-boundary-plethysm} to $\widetilde w$ and using
\eqref{eq:thin-resolution-chromatic}--\eqref{eq:thin-resolution-llt} gives
\[
    [t(I)]!\,\LLT_{w,I}
    =(q-q^{-1})^n
      X_{\widetilde w}\!\left[\frac{\mathbf x}{q-q^{-1}};q\right]
    =[t(I)]!(q-q^{-1})^n
      X_w\!\left[\frac{\mathbf x}{q-q^{-1}};q\right].
\]
Dividing by $[t(I)]!$ proves the assertion.
\end{proof}

\begin{remark}
\label{rem:alternative-llt-proof}
An alternative approach to
Proposition~\ref{prop:web-llt-plethysm} is to verify directly that the
normalized state sum defines a graded algebra map on the positive
annular skein over $\mathbb Q(q)$. This requires checking cut independence,
compatibility with the web and skein relations, and multiplicativity
under disjoint union.

The remaining calculation concerns an essential circle $C_d$ of
thickness $d$. Its unique standard labeled loop decomposition has only
the initial attacks $(u,v)$ with $u<v$. The signed $q$-multinomial
identity and the $q$-binomial theorem give
\[
    \LLT_{C_d}[\mathbf x;q]
    =
    \sum_{\lambda\vdash d}
    \frac{q^{-\sum_j\binom{\lambda_j}{2}}}
         {\prod_j[\lambda_j]!}\,m_\lambda[\mathbf x]
    =
    (q-q^{-1})^d
    e_d\!\left[\frac{\mathbf x}{q-q^{-1}}\right].
\]
Under Turaev's isomorphism, $C_d$ corresponds to $e_d$. Since the
elementary symmetric functions generate the ring of symmetric functions
and the degree-normalized plethystic substitution is multiplicative,
this calculation identifies the two algebra maps on generators and hence
proves Proposition~\ref{prop:web-llt-plethysm} without using the
thin-boundary result.
\end{remark}

\subsection{The HOMFLY--PT specialization}

We now turn to the relation with the HOMFLY--PT polynomial. We use the normalization in which the uncolored unknot has value $(a-a^{-1})/(q-q^{-1})$.

The rules in \eqref{eq:plethystic-conventions} give
\[
    p_m\!\left[\frac{a-a^{-1}}{q-q^{-1}}\right]
    =\frac{a^m-a^{-m}}{q^m-q^{-m}}.
\]
This defines an algebra homomorphism from the ring of symmetric functions
over $\mathbb Q(q)$ to $\mathbb Q(a,q)$. 

\begin{proposition}[{\cite[Proposition~2.3]{GW23}}]
\label{prop:homfly-specialization}
Let $L$ be an annular link,  $X_L$ be its image under
Turaev's isomorphism, and $P_L$ be its HOMFLY--PT polynomial. Then
\[
    P_L(a,q)
    =X_L\!\left[\frac{a-a^{-1}}{q-q^{-1}};q\right].
\]
\end{proposition}

Accordingly, for an annular web $w$ of degree $n$,
define its HOMFLY--PT polynomial by
\begin{equation}
\label{eq:homfly-from-llt}
    P_w(a,q)
    :=X_w\!\left[\frac{a-a^{-1}}{q-q^{-1}};q\right]=\frac{\LLT_w[a-a^{-1};q]}{(q-q^{-1})^n}.
\end{equation}
Here, the last equality follows from Proposition~\ref{prop:web-llt-plethysm}.

It is standard that the specialization $s_\lambda[a-a^{-1}]$ of the Schur function vanishes
unless $\lambda$ is a hook; see, for example,
\cite[Chapter I, Section 3, Example 23]{Mac95}. More precisely, for
$\lambda\vdash n$,
\begin{equation}
\label{eq:hook-schur-specialization}
    s_\lambda[a-a^{-1}]
    =
    \begin{cases}
        (-1)^{n-k}
        \bigl(a^{2k-n}-a^{2k-n-2}\bigr),
        & \text{if } \lambda=(k,1^{n-k}),\\
        0,
        & \text{otherwise}.
    \end{cases}
\end{equation}
Thus, only the hook Schur coefficients of $\LLT_w$ contribute to
\eqref{eq:homfly-from-llt}; we compute them next.

For a composition $\alpha=(\alpha_1,\ldots,\alpha_s)\models n$, let
$\calA_\alpha$ denote the set of fillings of the Young diagram of
$\alpha$ with $1,\ldots,n$, each appearing exactly once, such that the
entries increase from left to right in each row. We call an element of
$\calA_\alpha$ an \emph{array} of shape $\alpha$. Each $T\in\calA_\alpha$ determines a loop coloring
$\sigma_T:[n]\to \BZ_{>0}$ satisfying $x^{\sigma_T}=x^\alpha$ by
\[
    \sigma_T(u)=i
    \qquad\text{if $u$ lies in the $i$-th row of $T$}.
\]
By abuse of notation, we write $\inv_{\mathcal L}(T):=\inv_{\mathcal L}(\sigma_T)$. Then the definition of $\LLT_w$ gives
\begin{equation}
\label{eq:llt-array-coefficient}
    [t(I)]!\,[x^\alpha]\LLT_w
    =
    \sum_{\mathcal L\in\operatorname{SLD}(w,I)}
    \ \sum_{T\in\calA_\alpha}
        q^{\inv_{\mathcal L}(T)}.
\end{equation}

For $T\in\calA_\alpha$, define its \emph{decreasing row word}
$\operatorname{rw}(T)$ by reading each row from right to left, starting
with the first row. Write
\[
    \operatorname{rw}(T)=r_1r_2\cdots r_n.
\]
If $\tau_T\in S_n$ is defined by
\[
    \tau_T^{-1}=\operatorname{rw}(T),
\]
then $\tau_T$ is obtained by standardizing the row coloring
$\sigma_T$, breaking ties by decreasing label. By the tie-breaking
convention in \eqref{eq:llt-attack-sign}, every attack has the same
contribution for $\sigma_T$ and $\tau_T$. Hence
\begin{equation}
\label{eq:array-standardization}
    \inv_{\mathcal L}(T)=\inv_{\mathcal L}(\tau_T).
\end{equation}
In particular, the inversion weight depends only on the decreasing
row word.

\begin{theorem}
\label{thm:web-llt-hook}
Let $w$ be an annular web of degree $n$, and let $I$ be a radial cut with initial thickness $t(I)$. 
Then the hook Schur coefficient of $\LLT_w$ is given by
\begin{equation}
\label{eq:llt-hook-coeff}
    [s_{(k,1^{n-k})}]\LLT_w
    =
    \frac{1}{[t(I)]!}
    \sum_{\mathcal L\in\operatorname{SLD}(w,I)}
    \ \sum_{T\in\operatorname{SYT}(k,1^{n-k})}
        q^{\inv_{\mathcal L}(T)},
\end{equation}
for $1\leq k\leq n$.
\end{theorem}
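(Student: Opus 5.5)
Because $\LLT_w$ is symmetric (Proposition~\ref{prop:web-llt-plethysm}), the plan is to imitate the Jacobi--Trudi computation \eqref{eq:schur-coeff-jt}. We cancel non-standard arrays with a sign-reversing involution; the involution now acts on fillings by integers instead of on $w$-arrays. Write $\lambda=(k,1^{n-k})$ and $\ell=n-k+1$. By the Hall inner product and \eqref{eq:llt-array-coefficient},
\[
    [t(I)]!\,[s_\lambda]\LLT_w=\sum_{\mathcal L\in\operatorname{SLD}(w,I)}\ \sum_{\sigma\in S_\ell}\sgn(\sigma)\sum_{T\in\calA_{\sigma\cdot\lambda}} q^{\inv_{\mathcal L}(T)} .
\]
Here $\calA_{\sigma\cdot\lambda}$ consists of row-increasing fillings, and empty rows are allowed. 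We fix $\mathcal L$ and work one $\mathcal L$ at a time. By \eqref{eq:array-standardization}, the weight of $T$ depends only on the decreasing row word $\operatorname{rw}(T)$. It therefore suffices to build a sign-reversing involution on the pairs $(\sigma,T)$ that preserves $\operatorname{rw}(T)$, and whose fixed points are exactly the pairs $(\mathrm{id},T)$ with $T\in\operatorname{SYT}(\lambda)$. Since the involution is independent of $\mathcal L$, it cancels terms uniformly in $\mathcal L$.

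\textbf{Step 1: shapes.} By Lemma~\ref{lem:hook-composition}, every row after the first with more than one box is preceded by an empty row. So, apart from the first row, each $\sigma\cdot\lambda$ consists of rows of length $0$ or $1$, together with rows of length $>1$ that directly follow an empty row. For a filling of such a shape, call row $i\ge2$ \emph{bad} in either of two cases. Either row $i-1$ is empty and row $i$ is nonempty, or both rows have first entries and $T_{i,1}<T_{i-1,1}$. A filling of shape $\lambda$ with no bad row is exactly a standard Young tableau: the first column increases and the rows increase. As in Section~\ref{sec.Schur}, the last row of $\sigma\cdot\lambda$ is nonempty, and some row is empty when $\sigma\neq\mathrm{id}$. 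Hence every pair with $\sigma\ne\mathrm{id}$ has a bad row.

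\textbf{Step 2: the involution.} Let $I$ be the largest index of a bad row. We use the classical Gessel--Viennot/Gasharov swap at the first column. Move the tail of row $I-1$ from column $1$ onward down to row $I$, and move the tail of row $I$ strictly after column $1$ up to row $I-1$. This is the integer analogue of $\SW_{(I,1)}$, with $A$ and $B$ the two tails, and it turns the shape into $(\sigma\circ(I-1,I))\cdot\lambda$. We must check three things, following the argument after Lemma~\ref{lem:hook-composition}. First, rows stay increasing. Second, row $I$ stays bad and no larger index becomes bad, because rows below $I$ and the cell $(I,1)$ are untouched. Third, the map is an involution.

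\textbf{Step 3 (main obstacle): the word $\operatorname{rw}$ is preserved.} The decreasing row word is not literally preserved by moving entries between rows. In the integer setting, however, only the first entry of each of rows $I-1$ and $I$ matters, because every row other than the first has at most one box, or sits just after an empty row. For an empty row $I-1$, the move shifts the entire row $I$ up, so the concatenated word is unchanged. In the other case both rows have length $1$ (or row $I-1$ is the first row and row $I$ has length $1$), and we need to rethink the move. Here I would simply swap the first entries $T_{I-1,1}$ and $T_{I,1}$ together with the tails. I expect the point to be that the word read right to left is invariant because the blocks only change by which row they are charged to. This bookkeeping, especially when row $I-1$ is the first row of length $k$, is where care is needed. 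If literal word invariance fails, I would instead compare $\inv_{\mathcal L}$ directly. The two colorings differ only by exchanging the adjacent colors $I-1$ and $I$ on a set of labels, and the tie-break in \eqref{eq:llt-attack-sign} should make the attack signs agree. After cancellation, only the pairs $(\mathrm{id},T)$ with $T\in\operatorname{SYT}(\lambda)$ remain, and dividing by $[t(I)]!$ gives \eqref{eq:llt-hook-coeff}.
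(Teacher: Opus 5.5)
Your framework is the paper's. You use the Jacobi--Trudi expansion over pairs $(\sigma,T)$, your ``bad'' rows are exactly the paper's switchable rows, and you take the largest such index. The move in your Step~2 is also literally the paper's move: new row $I-1$ is old row $I$ minus its first entry, and new row $I$ is $T_{I,1}$ followed by all of old row $I-1$.

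The gap is in Step~3, which is the only place weight preservation is argued, and there the reasoning goes wrong. You assert that $\operatorname{rw}$ is not preserved when row $I-1$ is nonempty, and you propose to change the move. In that case Lemma~\ref{lem:hook-composition} forces row $I=(a)$ with $a<b_1$, where row $I-1=(b_1<\cdots<b_p)$. Note that $p>1$ can occur with $I-1\ge 2$ when row $I-2$ is empty, for instance shape $(0,3,1)$ for $\lambda=(2,1,1)$; your case list omits this. Your Step~2 move gives row $I-1=\emptyset$ and row $I=(a,b_1,\ldots,b_p)$, which is increasing because $a<b_1$. Reading right to left, rows $I-1,I$ give $b_p\cdots b_1\,a$ both before and after. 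In the empty-predecessor case they give $a_m\cdots a_2\,a_1$ both before and after. So the move only re-cuts the concatenated word, and $\operatorname{rw}(T')=\operatorname{rw}(T)$ holds exactly. Then \eqref{eq:array-standardization} gives $\inv_{\mathcal L}(T')=\inv_{\mathcal L}(T)$. This one observation is what the paper's proof uses, and your proposal does not establish it.

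The replacement you suggest would break the argument. Take rows $(b),(a)$ with $a<b$. Swapping first entries gives rows $(a),(b)$, which has the same shape, whereas $\sigma'\cdot\lambda$ has row lengths $(0,2)$ there, so the result is not in $\calA_{\sigma'\cdot\lambda}$. The word also changes from $ba$ to $ab$. Moreover, row $I$ stops being bad, so the map is no longer an involution. If ``together with the tails'' means swapping entire rows, you get lengths $(1,p)$ instead of $(0,p+1)$, with the same defects.

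The fallback of comparing $\inv_{\mathcal L}$ ``directly'' is only a hope as stated. For the correct move, it amounts to checking that recoloring $b_1,\ldots,b_p$ from $I-1$ to $I$ leaves every $\epsilon_\sigma(u,v)$ unchanged, thanks to the tie-break and $a<b_s$. That check is precisely the content of \eqref{eq:array-standardization}.

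Drop the ``rethink'' and keep your Step~2 move together with this observation. The routine checks you listed then go through: rows stay increasing, row $I$ stays bad with the same first entry, and the rows below $I$ are untouched. With that, the proof is complete and coincides with the paper's.
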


\begin{proof}
Put $\lambda=(k,1^{n-k})$ and $r=\ell(\lambda)$. We extend
$\calA_\alpha$ to weak compositions by allowing empty rows.
As in Section~\ref{sec.Schur}, the Jacobi--Trudi identity and \eqref{eq:llt-array-coefficient} give
\begin{equation}
\label{eq:llt-hook-signed-arrays}
    [t(I)]!\,[s_\lambda]\LLT_w
    =
    \sum_{\mathcal L\in\operatorname{SLD}(w,I)}
    \ \sum_{\sigma\in S_r}
        \sgn(\sigma)
        \sum_{T\in\calA_{\sigma\cdot\lambda}}
            q^{\inv_{\mathcal L}(T)},
\end{equation}
where terms for which $\sigma\cdot\lambda$ has a negative part
are omitted.

Fix $\mathcal L$ and $(\sigma,T)$, and put
$\alpha=\sigma\cdot\lambda$. By Lemma~\ref{lem:hook-composition},
every row $i>1$ with more than one entry has an empty predecessor.
A nonempty row $i>1$ is \emph{switchable} if row $i-1$ is empty
or $T_{i,1}<T_{i-1,1}$.

Choose the largest switchable $i$, if one exists. If row $i-1$
is empty, move all but the first entry of row $i$ into it.
Otherwise, row $i$ has a single entry; move the entries of
row $i-1$ after that entry. The resulting array $T'$ has
increasing rows and the same decreasing row word as $T$.
Thus \eqref{eq:array-standardization} gives
\[
    \inv_{\mathcal L}(T')=\inv_{\mathcal L}(T).
\]
The affected row lengths change by
\[
    (\alpha_{i-1},\alpha_i)
    \longmapsto
    (\alpha_i-1,\alpha_{i-1}+1).
\]
Hence $T'$ has shape $\sigma'\cdot\lambda$, where
$\sigma'=\sigma\circ(i-1,i)$ and
$\sgn(\sigma')=-\sgn(\sigma)$.

Row $i$ remains switchable with the same first entry, and all
lower rows are unchanged. Thus $i$ is still the largest
switchable index, and applying the switch again recovers $T$.
Therefore $(\sigma,T)\mapsto(\sigma',T')$ is a sign-reversing,
weight-preserving involution.

Finally, the last row is nonempty, since
\[
    (\sigma\cdot\lambda)_r
    =
    \lambda_{\sigma(r)}+r-\sigma(r)>0.
\]
If an empty row occurs, the next nonempty row is switchable.
Thus an unpaired array has no empty rows.
Lemma~\ref{lem:hook-composition} then forces $\alpha=\lambda$
and hence $\sigma=\mathrm{id}$. On this shape, having no
switchable row means that the first column increases from
top to bottom. The unpaired arrays are therefore exactly
the standard Young tableaux of shape $\lambda$.

All surviving terms have positive sign, so
\eqref{eq:llt-hook-signed-arrays} reduces to
\[
    [t(I)]!\,[s_\lambda]\LLT_w
    =
    \sum_{\mathcal L\in\operatorname{SLD}(w,I)}
    \ \sum_{T\in\operatorname{SYT}(\lambda)}
        q^{\inv_{\mathcal L}(T)},
\]
which proves the theorem.
\end{proof}

Together with the definition in \eqref{eq:homfly-from-llt}, the hook Schur formula yields the following expression for the coefficients of the HOMFLY--PT polynomial, which is a reformulation of Theorem~\ref{thm.mainC}.

\begin{corollary}
\label{cor:homfly-hook-formula}
Let $w$ be an annular web of degree $n$, and let $I$ be a radial cut with initial thickness $t(I)$. 
Then, for $0\leq k\leq n$, the coefficient of $a^{2k-n}$ in $P_w$ is given by
\begin{equation}
\label{eq:homfly-hook-formula}
    [a^{2k-n}]P_w
    =
    \frac{(-1)^{n-k}}
         {[t(I)]!(q-q^{-1})^n}
    \sum_{\mathcal L\in\operatorname{SLD}(w,I)}
    \ \sum_{\substack{j\in\{k,k+1\}\\1\leq j\leq n}}
    \ \sum_{T\in\operatorname{SYT}(j,1^{n-j})}
        q^{\inv_{\mathcal L}(T)}.
\end{equation}
All other coefficients in $a$ vanish.
\end{corollary}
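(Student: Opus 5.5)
The plan is to combine the definition \eqref{eq:homfly-from-llt} with the Schur expansion of $\LLT_w$ and the hook specialization \eqref{eq:hook-schur-specialization}. Since Proposition~\ref{prop:web-llt-plethysm} shows $\LLT_w$ is symmetric of degree $n$, write
\[
    \LLT_w=\sum_{\lambda\vdash n}\bigl([s_\lambda]\LLT_w\bigr)\,s_\lambda .
\]
Plethystic substitution $\mathbf x\mapsto a-a^{-1}$ is a $\mathbb Q(q)$-algebra homomorphism and is linear. Hence
\[
    (q-q^{-1})^nP_w=\sum_{\lambda\vdash n}\bigl([s_\lambda]\LLT_w\bigr)\,s_\lambda[a-a^{-1}].
\]
By \eqref{eq:hook-schur-specialization}, only hooks $\lambda=(j,1^{n-j})$ with $1\le j\le n$ survive. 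Each contributes $(-1)^{n-j}\bigl(a^{2j-n}-a^{2j-n-2}\bigr)$ times $c_j:=[s_{(j,1^{n-j})}]\LLT_w$.

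Next I extract the coefficient of $a^{2k-n}$. The monomial $a^{2k-n}$ arises from $j=k$, through the term $(-1)^{n-k}a^{2k-n}$, when $1\le k\le n$. It also arises from $j=k+1$, through the term $-(-1)^{n-k-1}a^{2(k+1)-n-2}=(-1)^{n-k}a^{2k-n}$, when $0\le k\le n-1$. Both contributions carry the same sign $(-1)^{n-k}$. This gives
\[
    [a^{2k-n}]\bigl((q-q^{-1})^nP_w\bigr)=(-1)^{n-k}\sum_{\substack{j\in\{k,k+1\}\\1\le j\le n}}c_j .
\]
Substituting the formula for $c_j$ from Theorem~\ref{thm:web-llt-hook} and dividing by $(q-q^{-1})^n$ yields \eqref{eq:homfly-hook-formula}.

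Finally, every exponent of $a$ produced above has the form $2j-n$ or $2j-n-2$ with $1\le j\le n$. All of these lie in $\{2k-n: 0\le k\le n\}$, so all other coefficients of $a$ vanish.

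The only step needing care is the sign and index bookkeeping at the two boundary cases $k=0$ and $k=n$, where only one of $j=k$ or $j=k+1$ is in range. The constraint $1\le j\le n$ in the sum handles exactly this. This is equivalent to the convention $b_0=b_{n+1}=0$ in Theorem~\ref{thm.mainC}.
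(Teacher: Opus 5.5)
Your proposal is correct and follows the paper's proof: expand $\LLT_w$ in Schur functions and specialize via \eqref{eq:homfly-from-llt} and \eqref{eq:hook-schur-specialization}, so that only hooks survive. Then observe that $a^{2k-n}$ receives contributions only from $j=k$ and $j=k+1$, with the common sign $(-1)^{n-k}$, and insert Theorem~\ref{thm:web-llt-hook}; your explicit checks of the $j=k+1$ sign and of the vanishing of all other powers of $a$ simply spell out what the paper leaves implicit.
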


\begin{proof}
Substituting the Schur expansion of $\LLT_w$ into
\eqref{eq:homfly-from-llt} and using
\eqref{eq:hook-schur-specialization}, we obtain
\[
    P_w(a,q)
    =
    \frac{1}{(q-q^{-1})^n}
    \sum_{j=1}^n
        (-1)^{n-j}
        [s_{(j,1^{n-j})}]\LLT_w
        \bigl(a^{2j-n}-a^{2j-n-2}\bigr).
\]
The coefficient of $a^{2k-n}$ receives contributions only from
$j=k$ and $j=k+1$. Applying Theorem~\ref{thm:web-llt-hook} gives
\eqref{eq:homfly-hook-formula}.
\end{proof}

Theorem~\ref{thm:web-llt-hook} gives manifestly positive formulas for
the hook Schur coefficients of $[t(I)]!\,\LLT_w$. We conjecture
that the restriction to hook shapes can be removed.

\begin{conjecture}
\label{conj:web-llt-schur-positivity}
Let $w$ be an annular web, and let $I$ be a radial cut with
initial thickness $t(I)$. Then
$[t(I)]!\,\LLT_w$ is Schur-positive.
\end{conjecture}

\section{Comparison with chromatic symmetric functions for graphs}
\label{sec.comparison}

In this section, we compare annular webs with unit interval graphs.
We first realize unit interval graphs as annular webs.
We then express the chromatic symmetric function of an arbitrary
annular web as a weighted sum of graph chromatic quasisymmetric
functions indexed by its geometric loop decompositions.

Let $G$ be a finite loopless directed multigraph with vertex set $[n]$
and edge multiset $E(G)$. A \emph{proper coloring} is a map
$c:[n]\to\mathbb Z_{>0}$ such that $c(u)\neq c(v)$ for every
$(u,v)\in E(G)$. Write $\mathcal C(G)$ for the set of proper colorings
and define the \emph{(signed) $G$-inversion}\footnote{This agrees with \eqref{eq:def of inv}. The standard convention is recovered by the normalization $q^{|E(G)|/2}X_G[\mathbf x;q^{1/2}]$.} by
\[
    \inv_G(c)
    :=\sum_{(u,v)\in E(G)}\operatorname{sgn}(c(v)-c(u)),
\]
counting edges with multiplicity. In this convention, the
\emph{chromatic quasisymmetric function} of $G$ \cite{Sta95, SW16} is
\[
    X_G[\mathbf x;q]
    :=\sum_{c\in\mathcal C(G)}q^{\inv_G(c)}
      \prod_{v\in[n]}x_{c(v)}.
\]

A \emph{Dyck path} $\pi$ of size $n$ runs from $(0,0)$ to $(n,n)$ by unit
up-steps and right-steps, staying weakly above the diagonal. If $\pi(i)$
is the height of its $i$-th column, then
\[
    i\leq\pi(i)\leq n,
    \qquad
    \pi(1)\leq\cdots\leq\pi(n).
\]
The associated graph $G(\pi)$ has vertex set $[n]$ and edge set
\[
    E(G(\pi))=\{(i,j):i<j\leq\pi(i)\},
\]
corresponding to the cells between $\pi$ and the diagonal.
These are \emph{unit interval graphs}, also realizable as intersection
graphs of unit-length intervals, and $X_{G(\pi)}$ is symmetric
\cite{SW16}. Figure~\ref{fig:Dyck path} shows an example with
$(\pi(1),\ldots,\pi(5))=(3,4,5,5,5)$.

\begin{figure}[htbp]
  \centering

\begin{tikzpicture}[scale=0.5]

  \foreach \i in {0,1,...,5}
  \draw[color=gray!70] (\i,0) -- (\i,5);

  \foreach \j in {0,1,...,5}
  \draw[color=gray!70]  (0,\j) -- (5,\j);

  \draw (0,0) -- (5,0);
  \draw (0,0) -- (0,5);
  \draw[color=gray!70] (0,0) -- (5,5);

  \draw[ultra thick]
  (0,0) -- (0,3) -- (1,3) -- (1,4) -- (2,4) -- (2,5) -- (5,5);
\end{tikzpicture}
\qquad \qquad
\begin{tikzpicture}[scale=0.9]
  \foreach \i in {1,...,5}
  \filldraw (\i,2) circle (1.5pt);
  \foreach \i in {1,...,5}
  \node at (\i,1.6) {\i};
  \draw[thick]
  (1,2) -- (5,2);
  \draw[thick]
  (5,2) arc [start angle=0, end angle=180, radius=1];
  \draw[thick]
  (4,2) arc [start angle=0, end angle=180, radius=1];
  \draw[thick]
  (3,2) arc [start angle=0, end angle=180, radius=1];

\end{tikzpicture}

\caption{A Dyck path $\pi$ and the associated unit interval graph $G(\pi)$.}
\label{fig:Dyck path}
\end{figure}

We now construct an annular web $w(\pi)$ associated to a Dyck path $\pi$. Start with $n$ upward-oriented unit strands
marked $1,\ldots,n$ from left to right. In what follows, a strand marked by a consecutive
block has thickness equal to the size of that block.

Set $\pi(0)=1$ and $b_i=\max\{i,\pi(i-1)\}$. At stage $i$, the active
strand is marked $\{i,\ldots,b_i\}$. All other strands are unit strands;
those marked $1,\ldots,i-1$ remain fixed in all later stages.
Writing $\mid$ for adjacent strands in left-to-right order, define the
merge and split operations by
\[
    \begin{aligned}
        M(j):&\quad
        \{i,\ldots,j-1\}\mid\{j\}
        &&\longmapsto \{i,\ldots,j\},\\
        S(i):&\quad
        \{i,\ldots,\pi(i)\}
        &&\longmapsto \{i\}\mid\{i+1,\ldots,\pi(i)\}.
    \end{aligned}
\]
Set $S(i)=\mathrm{id}$ when $\pi(i)=i$. Stage $i$ applies
\[
    W_i=S(i)M(\pi(i))\cdots M(b_i+1),
\]
with operators acting from right to left and the merge product empty
when $\pi(i)=b_i$. After $W_n\cdots W_1$, all strands again have
thickness one. Forgetting the markings and joining corresponding top
and bottom endpoints in the annulus defines $w(\pi)$. 
Figure~\ref{fig:dyck-path-associated-web}
provides examples.
\begin{figure}[!htbp]
	\centering
\begingroup%
  \makeatletter%
  \providecommand\color[2][]{%
    \errmessage{(Inkscape) Color is used for the text in Inkscape, but the package 'color.sty' is not loaded}%
    \renewcommand\color[2][]{}%
  }%
  \providecommand\transparent[1]{%
    \errmessage{(Inkscape) Transparency is used (non-zero) for the text in Inkscape, but the package 'transparent.sty' is not loaded}%
    \renewcommand\transparent[1]{}%
  }%
  \providecommand\rotatebox[2]{#2}%
  \newcommand*\fsize{\dimexpr\f@size pt\relax}%
  \newcommand*\lineheight[1]{\fontsize{\fsize}{#1\fsize}\selectfont}%
  \ifx\svgwidth\undefined%
    \setlength{\unitlength}{355.39043517bp}%
    \ifx\svgscale\undefined%
      \relax%
    \else%
      \setlength{\unitlength}{\unitlength * \real{\svgscale}}%
    \fi%
  \else%
    \setlength{\unitlength}{\svgwidth}%
  \fi%
  \global\let\svgwidth\undefined%
  \global\let\svgscale\undefined%
  \makeatother%
  \begin{picture}(1,0.31543149)%
    \lineheight{1}%
    \setlength\tabcolsep{0pt}%
    \put(0,0){\includegraphics[width=\unitlength,page=1]{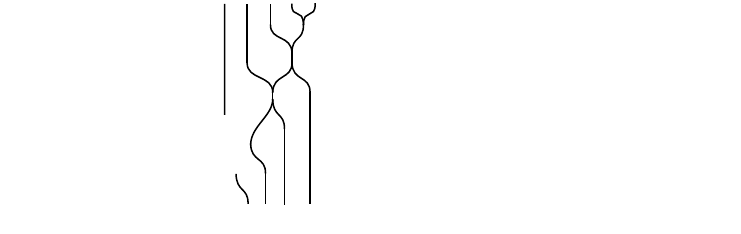}}%
    \put(0.09807088,0.00851508){\color[rgb]{0,0,0}\makebox(0,0)[lt]{\lineheight{1.25}\smash{\begin{tabular}[t]{l}$\pi$\end{tabular}}}}%
    \put(0.34235613,0.00576981){\color[rgb]{0,0,0}\makebox(0,0)[lt]{\lineheight{1.25}\smash{\begin{tabular}[t]{l}$w(\pi)$\end{tabular}}}}%
    \put(0.7011919,0.00509452){\color[rgb]{0,0,0}\makebox(0,0)[lt]{\lineheight{1.25}\smash{\begin{tabular}[t]{l}$\pi'$\end{tabular}}}}%
    \put(0.92279074,0.00604249){\color[rgb]{0,0,0}\makebox(0,0)[lt]{\lineheight{1.25}\smash{\begin{tabular}[t]{l}$w(\pi')$\end{tabular}}}}%
    \put(0,0){\includegraphics[width=\unitlength,page=2]{dyck.pdf}}%
  \end{picture}%
\endgroup%

	\caption{Dyck paths and their associated webs.}
	\label{fig:dyck-path-associated-web}
\end{figure}

The markings determine its unique loop decomposition
$(L_1,\ldots,L_n)$. Indeed, the strand separated at stage $i$ ends at
boundary position $i$ and is never used again. It must therefore belong
to the loop starting at position $i$. This forces every split
successively; the merges are determined by their incoming labels.

For $i<j$, the loop $L_j$ joins the active strand before $L_i$ is
separated exactly when $j\leq\pi(i)$. The two loops enter different
inputs only at that merge, with $L_i$ on the left and $L_j$ on the
right. Thus the directed merge graph is $G(\pi)$, and assigning colors
to $L_1,\ldots,L_n$ identifies proper colorings with the same weights.
Hence
\[
    X_{w(\pi)}[\mathbf x;q]=X_{G(\pi)}[\mathbf x;q].
\]

For a general annular web $w$ of degree $n$,
let $\mathbb L(w)$ be the set of geometric loop
decompositions, regarded as multisets. For each $L\in\mathbb L(w)$,
choose an auxiliary indexing $L_1,\ldots,L_n$ of its loop copies.
We denote by $\mu(L) \vdash n$ the partition of multiplicities of the distinct geometric loops,
and define a directed multigraph $G(L)$ on $[n]$ by adding
\begin{enumerate}
    \item an edge $(u,v)$ at each merge where $L_u$ enters from the left
          and $L_v$ from the right;
    \item an edge $(u,v)$ for each $u<v$ with $L_u=L_v$ as oriented
          edge-subgraphs.
\end{enumerate}
Edges are counted with multiplicity. The second rule gives a transitively
oriented complete graph on each class of coincident loop copies.

The underlying simple graph is the intersection graph of the loop
copies. Indeed, tracing distinct loops backwards from a shared edge
reaches a merge with different inputs. Loops meeting at a trivalent
vertex also share an edge, and the second rule accounts for coincident
copies.

\begin{proposition}
\label{prop:loop-decomposition-graph-expansion}
For every annular web $w$,
\[
    X_w[\mathbf x;q]
    =\sum_{L\in\mathbb L(w)}
        \frac{X_{G(L)}[\mathbf x;q]}{[\mu(L)]!}.
\]
\end{proposition}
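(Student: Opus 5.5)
The plan is to prove the identity by a direct fibration of colorings over loop decompositions, matching weights term by term.

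\textbf{Step 1: the forward map.} For a coloring $c\in\calC(w)$, each $\Gamma_i(c)$ is a disjoint union of loops. Let $\mathbb L(c)$ be the multiset of all components of all $\Gamma_i(c)$. By the coloring condition, $e$ lies in exactly $|c(e)|=t(e)$ of these loops, so $\mathbb L(c)\in\mathbb L(w)$. This partitions $\calC(w)=\bigsqcup_{L\in\mathbb L(w)}\calC_L(w)$, where $\calC_L(w)=\{c:\mathbb L(c)=L\}$. It then suffices to show, for each fixed $L$ with a chosen indexing $L_1,\dots,L_n$,
\[
  X_{G(L)}[\mathbf x;q]=[\mu(L)]!\sum_{c\in\calC_L(w)}q^{\inv_w(c)}x^c .
\]

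\textbf{Step 2: the lifting map.} Given a proper coloring $\kappa$ of $G(L)$, assign color $\kappa(u)$ to the copy $L_u$. Set $c(e)=\{\kappa(u): e\subset L_u\}$.

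\begin{itemize}
\item Properness of $\kappa$ means that adjacent copies get distinct colors. By the remark preceding the proposition, the underlying simple graph of $G(L)$ is the intersection graph of the loop copies, so loops sharing an edge receive different colors.
\item Hence $|c(e)|=t(e)$, each $\Gamma_i(c)$ is a disjoint union of the loops $L_u$ with $\kappa(u)=i$, and $c\in\calC_L(w)$.
\item The $x$-weights agree: $x^c=\prod_u x_{\kappa(u)}$, since $|\Gamma_i(c)|=\#\kappa^{-1}(i)$.
\item Conversely, every $c\in\calC_L(w)$ arises this way. The fiber over $c$ is a torsor for the group permuting indices within each class of coincident copies, because such copies must carry distinct colors. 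So each fiber has exactly $\prod_j\mu_j!$ elements.
\end{itemize}

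\textbf{Step 3: comparing the inversion statistics.} Split $\inv_{G(L)}(\kappa)$ into merge edges (rule (1)) and coincidence edges (rule (2)).

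For merge edges, at a merge $m$ the colors in $c(e_\mathrm R)$ are exactly $\kappa(v)$ for the copies $L_v$ entering on the right, and similarly on the left. Therefore
\[
  \inv_m(c)=\sum_{a\in c(e_\mathrm R),\,b\in c(e_\mathrm L)}\sgn(a-b)
\]
equals the sum of $\sgn(\kappa(v)-\kappa(u))$ over rule-(1) edges $(u,v)$ at $m$. Summing over merges gives $\inv_w(c)$.

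For coincidence edges, each class of size $m_j$ contributes $\sum_{u<v}\sgn(\kappa(v)-\kappa(u))$. Within a fiber the multiset of colors on the class is fixed and distinct, and the fiber runs over all orderings of it. So this contribution summed over the fiber is $\sum_{\tau\in S_{m_j}}q^{\binom{m_j}{2}-2\inv(\tau)}=[m_j]!$, using the symmetric $q$-factorial identity (the same $q$-multinomial fact used in the digon relation). The classes are independent, and the merge part is constant on fibers. Summing $q^{\inv_{G(L)}(\kappa)}x^\kappa$ over a fiber therefore gives $[\mu(L)]!\,q^{\inv_w(c)}x^c$, which proves Step 1's reduction and hence the proposition.

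\textbf{Main obstacle.} The delicate point is Step 3's merge bookkeeping. At a merge, two copies from the same coincidence class cannot enter on opposite sides, since coincident copies share all edges. Thus rule-(1) and rule-(2) edges never double count a pair. I would also check that rule (1), counted with multiplicity over merges, captures every left/right pair exactly once. This is the step I would verify most carefully, together with the intersection-graph claim used for properness.
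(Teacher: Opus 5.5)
Your proposal is correct and follows essentially the same route as the paper. Both arguments fiber the proper colorings of $G(L)$ over the web colorings with loop decomposition $L$, with fibers given by permuting colors within coincident classes; the merge edges reproduce $\inv_w(c)$, and each class of $m$ coincident copies contributes $\sum_{\rho\in S_m}q^{\binom m2-2\inv(\rho)}=[m]!$. Your version spells out more of the bookkeeping than the paper's (properness via the intersection-graph remark, and coincident copies always entering a merge on the same side), but the proof is the same.
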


\begin{proof}
A proper coloring of $G(L)$ induces a web coloring with loop
decomposition $L$. Conversely, every web coloring determines its loop
decomposition through its monochromatic components.

Fix a web coloring $c$ with decomposition $L$. Its lifts to $G(L)$
differ only by permuting colors among coincident loop copies. Their
merge contribution is always $\inv_w(c)$, while a class of $m$
coincident loops contributes
\[
    \sum_{\rho\in S_m}
        q^{\binom m2-2\operatorname{inv}(\rho)}=[m]!.
\]
The classes are independent, so summing over the lifts $\widetilde c$
of $c$ gives
\[
    \sum_{\widetilde c\mapsto c}
        q^{\inv_{G(L)}(\widetilde c)}
        \prod_{u=1}^n x_{\widetilde c(u)}
    =[\mu(L)]!\,q^{\inv_w(c)}x^c.
\]
Dividing by $[\mu(L)]!$ and summing over all web colorings proves the
identity.
\end{proof}

The running example in Figures~\ref{fig.SW} and~\ref{fig.noninv} has
four loop decompositions $L^{(1)},\ldots,L^{(4)}$, shown below.
The individual summands need not be symmetric. For example,
\[
    \begin{aligned}
        [x^{(2,1,1)}]X_{G(L^{(1)})}
        &=q^3+2q+3q^{-1},\\
        [x^{(1,2,1)}]X_{G(L^{(1)})}
        &=q^3+2q+2q^{-1}+q^{-3}.
    \end{aligned}
\]
At $q=1$, they may still fail to be Schur positive: both
$G(L^{(3)})$ and $G(L^{(4)})$ are claws, with
\[
    X_{G(L^{(3)})}[\mathbf x;1]
    =X_{G(L^{(4)})}[\mathbf x;1]
    =s_{31}-s_{22}+5s_{211}+8s_{1111}.
\]
Thus the Stanley--Stembridge theorem cannot be applied directly to
these summands to prove Conjecture~\ref{conj:e-positivity}.
Nevertheless, the sum of all four terms is the $e$-positive function
$X_w$.
\begin{figure}[!htbp]
	\centering
\begingroup%
  \makeatletter%
  \providecommand\color[2][]{%
    \errmessage{(Inkscape) Color is used for the text in Inkscape, but the package 'color.sty' is not loaded}%
    \renewcommand\color[2][]{}%
  }%
  \providecommand\transparent[1]{%
    \errmessage{(Inkscape) Transparency is used (non-zero) for the text in Inkscape, but the package 'transparent.sty' is not loaded}%
    \renewcommand\transparent[1]{}%
  }%
  \providecommand\rotatebox[2]{#2}%
  \newcommand*\fsize{\dimexpr\f@size pt\relax}%
  \newcommand*\lineheight[1]{\fontsize{\fsize}{#1\fsize}\selectfont}%
  \ifx\svgwidth\undefined%
    \setlength{\unitlength}{377.62868806bp}%
    \ifx\svgscale\undefined%
      \relax%
    \else%
      \setlength{\unitlength}{\unitlength * \real{\svgscale}}%
    \fi%
  \else%
    \setlength{\unitlength}{\svgwidth}%
  \fi%
  \global\let\svgwidth\undefined%
  \global\let\svgscale\undefined%
  \makeatother%
  \begin{picture}(1,0.3232797)%
    \lineheight{1}%
    \setlength\tabcolsep{0pt}%
    \put(0,0){\includegraphics[width=\unitlength,page=1]{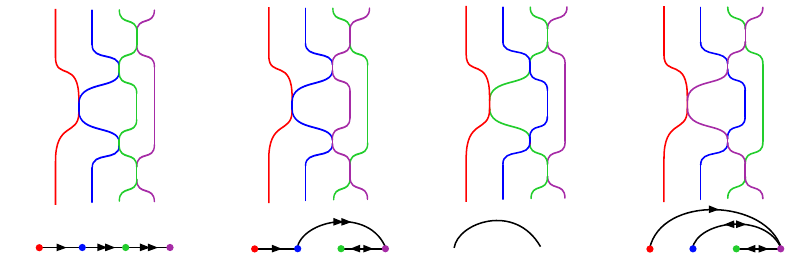}}%
    \put(0.00174761,0.17222958){\color[rgb]{0,0,0}\makebox(0,0)[lt]{\lineheight{1.25}\smash{\begin{tabular}[t]{l}$L^{(i)}$\end{tabular}}}}%
    \put(-0.08080638,0.00380667){\color[rgb]{0,0,0}\makebox(0,0)[lt]{\lineheight{1.25}\smash{\begin{tabular}[t]{l}$G(L^{(i)})$\end{tabular}}}}%
    \put(0,0){\includegraphics[width=\unitlength,page=2]{LoopDec.pdf}}%
  \end{picture}%
\endgroup%

\end{figure}

A similar phenomenon appears in \cite[Theorem~7.1]{ASS26}: an
$e$-positive sum whose summands need not be $e$-positive. The authors
ask for other interesting examples, and annular web chromatic
symmetric functions conjecturally provide such a family.

\subsection*{Acknowledgment}
J. Oh was supported by the National Research Foundation of Korea (NRF) grant funded by the Ministry of Education (RS-2026-25469155). S. Yoon was supported by the National Research Foundation of Korea (NRF) grant funded by the Ministry of Education (RS-2026-25478096). 

\subsection*{Declaration of AI use}
The authors used ChatGPT (OpenAI) for language editing, literature searches, and checking mathematical arguments and proofs for possible errors. It was not used to generate the mathematical ideas or results of this work. The authors independently verified all mathematical content and take full responsibility for the manuscript.

\bibliographystyle{hamsalpha}
\bibliography{biblio}
    
\end{document}